\newcommand{\mip}[1]{\begin{minipage}[c]{7em}#1\end{minipage}}
\newcommand{\med}[1]{\begin{minipage}[c]{15em}#1\end{minipage}}
\newcommand{\mad}[1]{\begin{minipage}[c]{5em}#1\end{minipage}}
\newcommand{\myd}[1]{\begin{minipage}[c]{11em}#1\end{minipage}}
\newcommand{\mep}[1]{\begin{minipage}[c]{10em}#1\end{minipage}}
\begin{document}
\sloppy
\newtheorem{theorem}{Theorem}
\newtheorem{acknowledgement}{Acknowledgement}
\newtheorem{algorithm}{Algorithm}
\newtheorem{axiom}{Axiom}
\newtheorem{case}{Case}
\newtheorem{claim}{Claim}
\newtheorem{conclusion}{Conclusion}
\newtheorem{condition}{Condition}
\newtheorem{conjecture}{Conjecture}
\newtheorem{corollary}[theorem]{Corollary}
\newtheorem{criterion}{Criterion}
\newtheorem{example}{Example}
\newtheorem{exercise}[theorem]{Exercise}
\newtheorem{lemma}[theorem]{Lemma}
\newtheorem{notation}{Proposition}
\newtheorem{problem}{Problem}
\newtheorem{proposition}[theorem]{Proposition}
\newtheorem{remark}{Remark}
\newtheorem{solution}{Solution}
\newtheorem{summary}{Summary}
\theoremstyle{definition}
\newtheorem{definition}{Definition}
\newcommand{\un}[1]{{\rm{un}}{#1}}
\newcommand{\tb}[2]{\begin{tabular}{lr}#1&\\&#2\end{tabular}}
\newcommand{\tbb}[2]{\begin{tabular}{lr}&#1\\#2&\end{tabular}}
\renewcommand{\labelenumi}{\theenumi)}
\newcommand{\pn}[1]{\begin{minipage}[t]{0.7in}\renewcommand\baselinestretch{0.8}\small #1\end{minipage}}
\newcommand{\pnn}[1]{\begin{minipage}[t]{0.95in}\renewcommand\baselinestretch{0.8}\small #1\end{minipage}}

\author{Halyna Krainichuk }
\title{Classification of group isotopes according to their symmetry groups}
\date{}
\maketitle

\abstract{\footnotesize The class of all quasigroups is covered by six classes: the class of all asymmetric quasigroups and five varieties of quasigroups (commutative, left symmetric, right symmetric, semi-symmetric and totally symmetric). Each of these classes is characterized by symmetry groups of its quasigroups.

In this article, criteria of belonging of group isotopes to each of these classes are found, including the corollaries for linear, medial and central quasigroups etc. It is established that an isotope of a noncommutative group is either semi-symmetric or asymmetric, each non-medial T-quasigroup is asymmetric etc. The obtained results are applied for the classification of linear group isotopes of prime orders, taking into account their up to isomorphism description.

\textbf{Keywords:} {central quasigroup, medial quasigroup, isotope, left-, right-, totally-, semi-symmetric, asymmetric, commutative quasigroup, isomorphism.}}

\emph{Classification:} 20N05, 05B15.

\section*{\centerline{Introduction}}
\indent

The group isotope variety is an abundant class of quasigroups in a sense that it contains quasigroups from almost all quasigroup classes, which have ever been under consideration. Many authors obtained their results in the group isotope theory that is why the available results are widely scattered in many articles and quite often are repeated. However, taking into account the group theory development, these results have largely a ``folkloric'' level of complexity. Considering their applicability, they should be systematized. One of the attempts of systemic presentations of the group isotopes is the work by Fedir Sokhatsky ``On group isotopes'', which is given in three articles~\cite{sokha19951,sokha19952,sokha19963}. But, parastrophic symmetry has been left unattended. 

Some a concept of a symmetry for all parastrophes of quasigroup was investigated by J.D.H.~Smith~\cite{smith2007}. This symmetry is known as \emph{triality}. The same approach for all parastrophes of quasigroup can be found in such articles as T.~Evans~\cite{evans1951}, V.~Belousov~\cite{bil1967}, Yu.~Movsisyan~\cite{movsis1998}, V.~Shcherbacov~\cite{shcherb2011}, G.~Belyavskaya and T.~Popovych~\cite{belyav_popovych2012} and in many articles of other authors. Somewhat different idea of a symmetry of quasigroups and loops was suggested by F.~Sokhatsky~\cite{sokha2013}. Using this concept, one can systematize many results.

The purpose of this article is a classification of group isotopes according to their parastrophic symmetry groups.

If a $\sigma$-parastrophe coincides with a quasigroup itself, then $\sigma$ is called a \emph{symmetry} of the quasigroup. The set of all symmetries forms a group, which is a subgroup of the symmetry group of order three, i.e., $S_{3}$.  All quasigroups defined on the same set are distributed into 6 blocks according to their symmetry groups. The class of quasigroups whose symmetry contains the given subgroup of the group $S_{3}$ forms a variety. Thus, there exist five of such varieties: commutative, left symmetric, right symmetric, semi-symmetric and totally symmetric. The rest of quasigroups forms the class of all asymmetric quasigroups, which consists of quasigroups with a unitary symmetry group.

In this article, the following results are obtained:
\begin{itemize}
  \item criteria of belonging of group isotopes to each of these classes (Theorem~\ref{tcgi});
  \item a classification of group isotopes defined on the same set, according to their symmetry groups (Table 1 and Corollary~\ref{cisp});
  \item corollaries for the well-known classes, such as linear (Corollary~\ref{cisp}), medial (Corollary~\ref{cismed}) and central quasigroups (Corollary~\ref{ctqis});
  \item every non-medial T-quasigroup is asymmetric (Corollary~\ref{cisnotmed});
 \item an isotope of a noncommutative group is either semi-symmetric or asymmetric (Corollary~\ref{cisnotcom});
 \item classification of linear isotopes of finite cyclic groups (Corollary~\ref{cisp});
 \item classification of linear group isotopes of prime orders (Theorem~\ref{tispm}).
\end{itemize}

\section{Preliminaries}
\indent

A groupoid $(Q;\cdot)$ is called a quasigroup, if for all $a$, $b\in Q$ every of the equations $x\cdot a=b$ and $a\cdot y=b$ has a unique solution.
For every $\sigma\in S_{3}$ a $\sigma$-parastrophe $(\stackrel{\sigma}{\cdot})$ is defined by
$$
x_{1\sigma}\stackrel{\sigma}{\cdot}x_{2\sigma}=x_{3\sigma}\;\Longleftrightarrow\;x_{1}\cdot x_{2}=x_{3},
$$
where $S_{3}:=\{\iota,s,\ell,r,s\ell,sr\}$ is the symmetric group of order $3$ and $s:=(12)$, $\ell:=(13)$, $r:=(23)$.

A mapping $(\sigma;(\cdot))\;\mapsto\;(\,\stackrel{\sigma}{\cdot})$ is an action on the set $\Delta$ of all quasigroup operations defined on $Q$. A stabilizer $\mathrm{Sym}(\cdot)$ is called a \emph{symmetry group} of $(\cdot)$. Thus, the number of different parastrophes of a quasigroup operation $(\cdot)$ depends on its symmetry group $\mathrm{Sym}(\cdot)$. Since $\mathrm{Sym}(\cdot)$ is a subgroup of the symmetric group $S_{3}$, then there are six classes of quasigroups. A quasigroup is called\\[-2ex]
\begin{itemize}
  \item \emph{asymmetric}, if $\mathrm{Sym}(\cdot)=\{\iota\}$, i.e., all parastrophes are pairwise different;
  \item \emph{commu\-ta\-ti\-ve}, if $\mathrm{Sym}(\cdot)\supseteq\{\iota,s\}$, i.e., the class of all commutative quasigroups is described by $xy=yx$ it means that\\
      $$(\cdot)=(\stackrel{s}{\cdot}),\quad (\stackrel{\ell}{\cdot})=(\stackrel{sr}{\cdot}),\quad (\stackrel{r}{\cdot})=(\stackrel{s\ell}{\cdot});$$
  \item \emph{left symmetric}, if $\mathrm{Sym}(\cdot)\supseteq\{\iota,r\}$, i.e., the class of all left symmetric quasigroups  is described by $x\cdot xy=y$, it means that\\
      $$(\cdot)=(\stackrel{r}{\cdot}),\quad (\stackrel{s}{\cdot})=(\stackrel{\ell}{\cdot}), \quad (\stackrel{s\ell}{\cdot})=(\stackrel{sr}{\cdot});$$
  \item \emph{right symmetric}, if $\mathrm{Sym}(\cdot)\supseteq\{\iota,\ell\}$, i.e., the class of all right symmetric quasigroups  is described by $xy\cdot y=x$, it means that\\
      $$(\cdot)=(\stackrel{\ell}{\cdot}),\quad (\stackrel{s}{\cdot})=(\stackrel{r}{\cdot}),\quad (\stackrel{sr}{\cdot})=(\stackrel{s\ell}{\cdot});$$
  \item \emph{se\-mi-sym\-met\-ric}, if $\mathrm{Sym}(\cdot)\supseteq A_{3}$, i.e., the class of all semi-symmetric quasigroups is described by $x\cdot yx=y$, it means that\\
       $$(\cdot)=(\stackrel{s\ell}{\cdot})=(\stackrel{sr}{\cdot}),\quad (\stackrel{s}{\cdot})=(\stackrel{\ell}{\cdot})=(\stackrel{r}{\cdot});$$
  \item \emph{totally sym\-met\-ric}, if $\mathrm{Sym}(\cdot)=S_{3}$, i.e.,  the class of all totally symmetric quasigroups  is described by $xy=yx$ and $xy\cdot y=x$, it means that all parastrophes coincide.
\end{itemize}

We will say, a\emph{ quasigroup has the property of a symmetry}, if it satisfies one of the following symmetry properties: commutativity, left symmetry, right symmetry, semi-symmetry or total symmetry.

Let $P$ be an arbitrary proposition in a class of quasigroups ${\mathfrak{A}}$. The proposition ${^{\sigma\!}P}$ is said to be a $\sigma$-parastrophe of $P$, if it can be obtained from $P$ by replacing every parastrophe $(\stackrel{\tau}{\cdot})$ with $(\stackrel{\tau\sigma^{-1}}{\!\!\cdot})$; ${^{\sigma\!}\mathfrak{A}}$ denotes the class of all $\sigma$-parastrophes of quasigroups from ${\mathfrak{A}}$.

\begin{theorem}\label{tpp}$\cite{sokha2013}$
Let $\mathfrak{A}$ be a class of quasigroups, then a proposition $P$ is true in ${\mathfrak{A}}$ if and only if ${^{\sigma\!}\!P}$ is true in ${^{\sigma\!}\mathfrak{A}}$.
\end{theorem}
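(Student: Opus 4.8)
The plan is to prove Theorem~\ref{tpp} by a direct unwinding of the definition of the parastrophe operation on propositions, reducing everything to the action of $S_3$ on the set $\Delta$ of quasigroup operations. The statement is essentially a transport-of-structure result: the map $(\stackrel{\tau}{\cdot})\mapsto(\stackrel{\tau\sigma^{-1}}{\cdot})$ is a bijection of $\Delta$, and a proposition $P$ built out of parastrophes is nothing but a formula in the language whose function symbols are the six parastrophe operations. Applying $\sigma$ to every operation symbol simultaneously is then a relabelling, and the content of the theorem is that such a uniform relabelling preserves and reflects truth when we correspondingly relabel the whole class $\mathfrak{A}$.

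First I would make precise the induced action of $S_3$ on propositions. Fix $\sigma\in S_3$ and note that since $\Delta$ is an $S_3$-set, the assignment $(\stackrel{\tau}{\cdot})\mapsto(\stackrel{\tau\sigma^{-1}}{\cdot})$ is a well-defined permutation of the operation symbols; here I must check the convention is consistent, i.e.\ that ${^{(\sigma_1\sigma_2)\!}P}={^{\sigma_1\!}({^{\sigma_2\!}P})}$ so that $P\mapsto{^{\sigma\!}P}$ is genuinely a left action on propositions (this is where the inverse in $\tau\sigma^{-1}$ earns its keep). Correspondingly, for a quasigroup $(Q;\cdot)\in\mathfrak{A}$, its image under the relabelling is the quasigroup whose $\tau$-parastrophe equals the $\tau\sigma^{-1}$-parastrophe of $(\cdot)$; one verifies this image is exactly the $\sigma$-parastrophe $(\stackrel{\sigma}{\cdot})$, so that ${^{\sigma\!}\mathfrak{A}}=\{(\stackrel{\sigma}{\cdot}):(\cdot)\in\mathfrak{A}\}$.

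Next I would establish the key evaluation lemma: for every quasigroup operation $(\cdot)$ and every $\sigma$,
$$
(Q;\stackrel{\sigma}{\cdot})\models{^{\sigma\!}P}\quad\Longleftrightarrow\quad(Q;\cdot)\models P.
$$
This follows by structural induction on $P$. The base case is a single parastrophe identity, where the definition $x_{1\sigma}\stackrel{\sigma}{\cdot}x_{2\sigma}=x_{3\sigma}\Longleftrightarrow x_1\cdot x_2=x_3$ shows that evaluating a $\tau$-parastrophe of $(\stackrel{\sigma}{\cdot})$ against reindexed variables reproduces the $\tau\sigma$-parastrophe of $(\cdot)$; substituting the replacement $(\stackrel{\tau}{\cdot})\mapsto(\stackrel{\tau\sigma^{-1}}{\cdot})$ makes the two sides match exactly. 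The inductive step for the logical connectives and quantifiers is immediate because the relabelling touches only the operation symbols and not the logical structure.

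With the evaluation lemma in hand the theorem is immediate: $P$ holds in $\mathfrak{A}$ iff $(Q;\cdot)\models P$ for all $(\cdot)\in\mathfrak{A}$, iff (by the lemma) $(Q;\stackrel{\sigma}{\cdot})\models{^{\sigma\!}P}$ for all $(\cdot)\in\mathfrak{A}$, iff ${^{\sigma\!}P}$ holds in ${^{\sigma\!}\mathfrak{A}}$ since $(\cdot)\mapsto(\stackrel{\sigma}{\cdot})$ ranges over all of ${^{\sigma\!}\mathfrak{A}}$. The main obstacle I anticipate is purely bookkeeping rather than conceptual: getting the direction of the index composition right throughout, so that the group action on operations, the action on propositions, and the reindexing of variables all compose consistently and the inverses cancel in the base case. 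Once the conventions are pinned down, the induction is routine.
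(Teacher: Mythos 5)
The paper offers no proof of Theorem~\ref{tpp}: it is quoted from \cite{sokha2013} as a known result, so there is no in-paper argument to compare yours against. Judged on its own, your proposal is correct and is the standard way to prove such a transport-of-structure statement. The decisive computation is the composition law for parastrophes: writing $(\circ)=(\stackrel{\sigma}{\cdot})$, the defining equivalence $x_{1\sigma}\stackrel{\sigma}{\cdot}x_{2\sigma}=x_{3\sigma}\Leftrightarrow x_{1}\cdot x_{2}=x_{3}$ (with indices acted on from the right) gives $(\stackrel{\rho}{\circ})=(\stackrel{\rho\sigma}{\cdot})$, so the symbol $(\stackrel{\tau\sigma^{-1}}{\cdot})$ occurring in ${^{\sigma\!}P}$, when interpreted in $(Q;\stackrel{\sigma}{\cdot})$, denotes exactly the operation $(\stackrel{\tau}{\cdot})$ of the original quasigroup --- this is precisely where the inverse in $\tau\sigma^{-1}$ cancels, as you anticipated. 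Since the replacement touches only operation symbols and not the logical skeleton, your evaluation lemma $(Q;\stackrel{\sigma}{\cdot})\models{^{\sigma\!}P}\Leftrightarrow(Q;\cdot)\models P$ follows by induction, and the theorem follows because $(\cdot)\mapsto(\stackrel{\sigma}{\cdot})$ maps $\mathfrak{A}$ onto ${^{\sigma\!}\mathfrak{A}}$ (a point you rightly include, since it is needed for the converse direction). One small imprecision: in the base case the variables of $P$ do not get reindexed --- the reindexing $x_{i\sigma}$ lives only inside the definition of the parastrophe operation and is used once to derive the composition law; the terms of $P$ themselves are left untouched. This does not affect the validity of the argument.
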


\begin{corollary}\label{c1pp}$\cite{sokha2013}$ Let $P$ be true in a class of quasigroups ${\mathfrak{A}}$, then ${^{\sigma\!}P}$ is true in ${^{\sigma\!}\mathfrak{A}}$ for all $\sigma\in\mathrm{Sym}({\mathfrak{A}})$.
\end{corollary}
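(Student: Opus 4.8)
The plan is to derive this immediately from the forward implication of Theorem~\ref{tpp}, the only extra ingredient being the meaning of $\sigma\in\mathrm{Sym}(\mathfrak{A})$. First I would recall that the symmetry group of a class $\mathfrak{A}$ is the stabilizer of $\mathfrak{A}$ under the parastrophy action $(\sigma;(\cdot))\mapsto(\stackrel{\sigma}{\cdot})$; that is, $\sigma\in\mathrm{Sym}(\mathfrak{A})$ precisely when the class of all $\sigma$-parastrophes coincides with the class itself, so that ${^{\sigma\!}\mathfrak{A}}=\mathfrak{A}$. This is the class-level analogue of the operation-level stabilizer $\mathrm{Sym}(\cdot)$ introduced above.

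Given this, the argument is two lines. By hypothesis $P$ holds in $\mathfrak{A}$, so the forward direction of Theorem~\ref{tpp} yields that ${^{\sigma\!}P}$ holds in ${^{\sigma\!}\mathfrak{A}}$ for every $\sigma\in S_{3}$. Restricting to $\sigma\in\mathrm{Sym}(\mathfrak{A})$ and substituting ${^{\sigma\!}\mathfrak{A}}=\mathfrak{A}$, one concludes that ${^{\sigma\!}P}$ holds in $\mathfrak{A}$, which is exactly the claimed statement.

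I expect no genuine obstacle here: the mathematical content is entirely absorbed by Theorem~\ref{tpp}. The only point requiring care is conceptual rather than technical, namely recognizing why the restriction to the class-stabilizer $\mathrm{Sym}(\mathfrak{A})$ is what makes the conclusion informative. The forward implication of Theorem~\ref{tpp} already produces a true $\sigma$-parastrophe proposition for an arbitrary $\sigma$, but in general that proposition lives in the distinct class ${^{\sigma\!}\mathfrak{A}}$; the hypothesis $\sigma\in\mathrm{Sym}(\mathfrak{A})$ collapses ${^{\sigma\!}\mathfrak{A}}$ back onto $\mathfrak{A}$, so that both $P$ and its parastrophe ${^{\sigma\!}P}$ are valid in one and the same class. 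This is the form in which the corollary will be applied when $\mathfrak{A}$ is one of the symmetry varieties listed above, whose class-symmetry group is a prescribed subgroup of $S_{3}$.
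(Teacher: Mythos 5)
Your proposal is correct: the corollary is exactly the forward implication of Theorem~\ref{tpp} combined with the observation that $\sigma\in\mathrm{Sym}(\mathfrak{A})$ means ${^{\sigma\!}\mathfrak{A}}=\mathfrak{A}$, and your handling of that one conceptual point is right. The paper itself gives no proof (the corollary is cited from~\cite{sokha2013}), but your derivation is plainly the intended one.
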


\begin{corollary}\label{c2pp}$\cite{sokha2013}$ Let $P$ be true in a totally symmetric class ${\mathfrak{A}}$, then ${^{\sigma\!}P}$ is true in ${\mathfrak{A}}$ for all $\sigma$.
\end{corollary}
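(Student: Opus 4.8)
The plan is to derive this statement directly from Corollary~\ref{c1pp} by observing that total symmetry of the class removes the restriction $\sigma\in\mathrm{Sym}(\mathfrak{A})$ that appears there. Recall that the symmetry group of a class is the stabilizer $\mathrm{Sym}(\mathfrak{A})=\{\sigma\in S_{3}:{^{\sigma\!}\mathfrak{A}}=\mathfrak{A}\}$ under the parastrophy action, exactly as $\mathrm{Sym}(\cdot)$ was defined for a single operation but now applied to the whole class. Saying that $\mathfrak{A}$ is totally symmetric means precisely that this stabilizer is all of $S_{3}$.

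First I would make the equality $\mathrm{Sym}(\mathfrak{A})=S_{3}$ explicit: since $\mathfrak{A}$ is totally symmetric, ${^{\sigma\!}\mathfrak{A}}=\mathfrak{A}$ holds for every $\sigma\in S_{3}$. Then, fixing an arbitrary $\sigma\in S_{3}$, I would note that $\sigma\in\mathrm{Sym}(\mathfrak{A})$ and apply Corollary~\ref{c1pp} with this $\sigma$: from the hypothesis that $P$ is true in $\mathfrak{A}$ it follows that ${^{\sigma\!}P}$ is true in ${^{\sigma\!}\mathfrak{A}}$. Substituting ${^{\sigma\!}\mathfrak{A}}=\mathfrak{A}$ yields that ${^{\sigma\!}P}$ is true in $\mathfrak{A}$ itself. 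Because $\sigma$ was arbitrary, this holds for all $\sigma$, which is the claim.

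Alternatively, one could bypass Corollary~\ref{c1pp} and invoke Theorem~\ref{tpp} directly: the theorem gives ``$P$ true in $\mathfrak{A}$ $\iff$ ${^{\sigma\!}P}$ true in ${^{\sigma\!}\mathfrak{A}}$'' for every $\sigma$, and total symmetry lets us replace ${^{\sigma\!}\mathfrak{A}}$ by $\mathfrak{A}$ throughout. Both routes are the same in substance. The argument is essentially definitional, so there is no genuine analytic obstacle; the only point requiring care is the translation of the phrase ``totally symmetric class'' into the statement $\mathrm{Sym}(\mathfrak{A})=S_{3}$, i.e. ${^{\sigma\!}\mathfrak{A}}=\mathfrak{A}$ for all $\sigma$. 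Once that identification is granted, the corollary is an immediate specialization of the preceding results.
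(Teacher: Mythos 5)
Your argument is correct. The paper itself gives no proof of this corollary (it is simply cited from \cite{sokha2013}), and your derivation --- total symmetry gives ${^{\sigma\!}\mathfrak{A}}=\mathfrak{A}$ for every $\sigma\in S_{3}$, so Corollary~\ref{c1pp} (or Theorem~\ref{tpp} directly) applies with no restriction on $\sigma$ and the conclusion lands back in $\mathfrak{A}$ --- is exactly the intended specialization. The only point worth flagging is that the paper never formally defines a ``totally symmetric class,'' but under either natural reading (the stabilizer of $\mathfrak{A}$ is all of $S_{3}$, or every member of $\mathfrak{A}$ is a totally symmetric quasigroup) one gets ${^{\sigma\!}\mathfrak{A}}=\mathfrak{A}$ for all $\sigma$, which is all your argument uses.
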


A groupoid $(Q;\cdot)$ is called an \emph{isotope of a groupoid} $(Q;+)$ iff there exists a triple of bijections $(\alpha,\beta,\gamma)$, which is called an \emph{isotopism}, such that the relation $x\cdot y:=\gamma^{-1}(\alpha{x}+\beta{y})$ holds. An isotope of a group is called a \emph{group isotope}.

A permutation $\alpha$ of a set $Q$ is called {\it unitary} of a group $(Q;+)$, if $\alpha(0)=0$, where $0$ is a neutral element of $(Q;+)$.

\begin{definition}\label{dcdec} \cite{sokha19952}
Let $(Q;\cdot)$ be a group isotope and $0$ be an arbitrary element of $Q$, then the right part of the formula
\begin{equation}\label{cdec}
  x\cdot y=\alpha x+a+\beta y
\end{equation}
is called a {\it $0$-canonical decomposition}, if  $(Q;+)$ is a group, $0$ is its neutral element and $\alpha$, $\beta$ are unitary permutations of $(Q;+)$.
\end{definition}

In this case, we say: the element $0$ defines the {\it canonical decomposition}; $(Q;+)$ is its {\it decomposition group}; $\alpha$, $\beta$ are its {\it coefficients} and $a$ is its {\it free member}.

\begin{theorem}\label{tcdec}$\cite{sokha19952}$
An arbitrary element of a group isotope uniquely defines a canonical decomposition of the isotope.
\end{theorem}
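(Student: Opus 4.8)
The plan is to prove existence and uniqueness separately. The existence is essentially a normal-form computation, while the real content is the uniqueness, which amounts to reconstructing the decomposition group $(Q;+)$ together with $\alpha$, $\beta$ and $a$ from $(Q;\cdot)$ and the prescribed element $0$ alone.

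\textbf{Existence.} Fix $0\in Q$ and choose $e,f\in Q$ with $f\cdot e=0$ (for a fixed $f$ the element $e$ is the unique solution of $f\cdot e=0$). I would form the principal isotope
\[
 x+y:=R_e^{-1}(x)\cdot L_f^{-1}(y),\qquad R_e(x)=x\cdot e,\quad L_f(y)=f\cdot y ,
\]
using the translations of $(Q;\cdot)$. A direct check shows that $f\cdot e=0$ is a two-sided neutral element of $(Q;+)$, so $(Q;+)$ is a loop; being isotopic to a group (any group to which $(Q;\cdot)$ is isotopic), it is a group by the classical theorem of Albert. Alternatively, substituting a representation $x\cdot y=\gamma^{-1}(\alpha_0 x\circ\beta_0 y)$ one checks directly that $x+y=\gamma^{-1}(\gamma x\circ k\circ\gamma y)$ for a fixed element $k$, which is manifestly a group operation. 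Substituting $x\mapsto R_e(x)$ and $y\mapsto L_f(y)$ into the definition of $+$ gives $x\cdot y=R_e(x)+L_f(y)$. I then absorb the constants by setting $\alpha x:=R_e(x)-R_e(0)$, $\beta y:=-L_f(0)+L_f(y)$ and $a:=R_e(0)+L_f(0)$ (inverses taken in $(Q;+)$); then $\alpha$, $\beta$ are permutations with $\alpha(0)=\beta(0)=0$, and $x\cdot y=\alpha x+a+\beta y$ is a $0$-canonical decomposition.

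\textbf{Uniqueness.} Here I would run the argument backwards. Let $x\cdot y=\alpha x+a+\beta y$ be \emph{any} $0$-canonical decomposition. Putting $y=0$, then $x=0$, then $x=y=0$ and using $\alpha(0)=\beta(0)=0$ yields $x\cdot0=\alpha x+a$, $\ 0\cdot y=a+\beta y$ and, crucially, $a=0\cdot0$, so the free member is already forced. Eliminating $\alpha x$ and $\beta y$ between these identities produces the key relation
\[
 x\cdot y=(x\cdot0)-(0\cdot0)+(0\cdot y),
\]
with $+$ and $-$ taken in $(Q;+)$. Writing $R_0(x)=x\cdot0$, $L_0(y)=0\cdot y$ (which depend only on $(Q;\cdot)$ and $0$) and $u*v:=R_0^{-1}(u)\cdot L_0^{-1}(v)$, this becomes the \emph{determined} operation $u*v=u-a+v$.

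The task is then to recover $+$ from $*$. The permutation $\theta(u):=u*0=u-a$ depends only on $(Q;\cdot)$ and $0$, and a short computation shows $\theta^{-1}(u)=u+a$ is an isomorphism $(Q;+)\to(Q;*)$, since $(u+a)*(v+a)=u+v+a$. Consequently
\[
 u+v=\theta\big(\theta^{-1}(u)*\theta^{-1}(v)\big),
\]
so the decomposition group $(Q;+)$ is uniquely determined by $(Q;\cdot)$ and $0$; once $+$ and $a=0\cdot0$ are fixed, $\alpha x=(x\cdot0)-a$ and $\beta y=-a+(0\cdot y)$ are determined as well. This reconstruction of $+$ — converting the single ``shift by $-a$'' that relates the intrinsic operation $*$ to $+$ into an explicit isomorphism anchored at the prescribed neutral element $0$ — is the step I expect to be the main obstacle; the remaining identifications are routine bookkeeping with translations.
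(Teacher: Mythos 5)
The paper does not prove Theorem~\ref{tcdec} at all: it is imported verbatim from Sokhatsky's \emph{On group isotopes II} \cite{sokha19952}, so there is no in-paper argument to compare yours against. Judged on its own, your proof is complete and correct. The existence half is the standard LP-isotope construction: choosing $f\cdot e=0$ makes $0$ the two-sided neutral element of $x+y=R_e^{-1}(x)\cdot L_f^{-1}(y)$, Albert's theorem (or your direct computation $x+y=\gamma^{-1}(\gamma x\circ k\circ\gamma y)$) shows $(Q;+)$ is a group, and your normalization $\alpha x=R_e(x)-R_e(0)$, $\beta y=-L_f(0)+L_f(y)$, $a=R_e(0)+L_f(0)$ correctly produces unitary coefficients — note you were careful with the order of summands, which matters since $(Q;+)$ need not be abelian. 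The uniqueness half is the substantive part and it works: $a=0\cdot 0$, $\alpha x=(x\cdot 0)-a$ and $\beta y=-a+(0\cdot y)$ are forced by specialization, so the only issue is whether $(Q;+)$ itself is determined; your observation that the intrinsic operation $u*v=R_0^{-1}(u)\cdot L_0^{-1}(v)$ equals $u-a+v$ and that the intrinsic permutation $\theta(u)=u*0=u-a$ lets one reconstruct $u+v=\theta\bigl(\theta^{-1}(u)*\theta^{-1}(v)\bigr)$ settles this, since both $*$ and $\theta$ depend only on $(Q;\cdot)$ and the chosen element $0$. This is essentially the same mechanism Sokhatsky uses in the cited source, so no gap to report.
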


\begin{corollary}\label{cigc} $\cite{sokha19951}$
If a group isotope $(Q;\cdot)$ satisfies an identity
$$
w_{1}(x)\cdot w_{2}(y)=w_{3}(y)\cdot w_{4}(x)
$$
and the variables $x$, $y$ are quadratic, then $(Q;\cdot)$ is isotopic to a commutative group.
\end{corollary}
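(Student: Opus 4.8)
The plan is to pass to a canonical decomposition and then reduce the quasigroup identity to a pure group identity that forces commutativity of the decomposition group.

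First I would fix an element, say $0$, and invoke Theorem~\ref{tcdec} to write $x\cdot y=\alpha x+a+\beta y$, where $(Q;+)$ is a group with neutral element $0$ and $\alpha,\beta$ are unitary. Since $x$ and $y$ are quadratic, each occurs exactly twice across the whole identity, hence exactly once in each of the two words carrying it; consequently every $w_i$ induces a bijection of $Q$ in its variable. Applying the decomposition to the outermost product on each side converts $w_1(x)\cdot w_2(y)=w_3(y)\cdot w_4(x)$ into the group equation $\alpha w_1(x)+a+\beta w_2(y)=\alpha w_3(y)+a+\beta w_4(x)$. Abbreviating $A:=\alpha w_1$, $B:=\beta w_2$, $C:=\alpha w_3$, $D:=\beta w_4$ (all bijections of $Q$) this reads
\begin{equation}
A(x)+a+B(y)=C(y)+a+D(x)\qquad(x,y\in Q).
\tag{$*$}
\end{equation}
The point I would emphasize here is that $A$ and $C$ are bijections, so $A(x)$ and $C(y)$ range independently over all of $Q$.

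Next I would separate the two variables. Setting $x=0$ in $(*)$ yields a relation of the form $B(y)=c_1+C(y)+c_2$, and setting $y=0$ yields $D(x)=d_1+A(x)+d_2$, for suitable constants $c_1,c_2,d_1,d_2\in Q$. Substituting these two relations back into $(*)$ eliminates $B$ and $D$ and leaves, after collecting the constants,
\begin{equation}
u+p+w+q=w+r+u+s\qquad(u,w\in Q),
\tag{$**$}
\end{equation}
where $u=A(x)$ and $w=C(y)$ now run independently over $Q$ and $p,q,r,s$ are fixed elements.

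Finally I would exploit $(**)$ by specialization. Putting $u=w=0$ gives $p+q=r+s$. Putting $w=0$ and using this identity forces $u+r=r+u$ for all $u$, so $r$ is central; symmetrically, putting $u=0$ forces $p+w=w+p$ for all $w$, so $p$ is central. Feeding the centrality of $p$ and $r$ back into $(**)$ and cancelling the common constant $p+q=r+s$ collapses it to $u+w=w+u$ for all $u,w\in Q$. Hence the decomposition group $(Q;+)$ is commutative, and since the canonical decomposition already exhibits $(Q;\cdot)$ as an isotope of $(Q;+)$, the quasigroup $(Q;\cdot)$ is isotopic to a commutative group.

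The load-bearing step, and the one I would justify most carefully, is the passage to $(**)$: it rests on the quadratic hypothesis to guarantee that $A$ and $C$ are bijections, which is precisely what allows $u$ and $w$ to vary freely and independently. Once that independence is secured the remainder is elementary group algebra; the only caution required is to keep the additive expressions correctly ordered, since $(Q;+)$ is a priori noncommutative.
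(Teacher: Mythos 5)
The paper does not prove this corollary at all: it is imported verbatim from Sokhatsky's \emph{On group isotopes I} \cite{sokha19951}, so there is no in-paper argument to compare against. Judged on its own, your proof is correct and complete: the reduction of the identity to the relation $u+p+w+q=w+r+u+s$ with $u,w$ ranging independently over $Q$, followed by the specializations $u=w=0$, $w=0$, $u=0$ to extract $p+q=r+s$ and the centrality of $p$ and $r$, does force $u+w=w+u$, and the canonical decomposition then exhibits $(Q;\cdot)$ as an isotope of the commutative group $(Q;+)$. This is essentially the standard argument one would expect behind the cited result. The one step you should state more carefully is the claim that quadraticity of $x$ forces exactly one occurrence of $x$ in each of $w_1$ and $w_4$ (and likewise for $y$): quadratic only means two occurrences in the whole identity, so a priori both could sit in the same word, in which case the other word is constant in that variable and $A$ or $C$ fails to be a bijection. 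That degenerate case is excluded by the intended reading of the notation $w_1(x),\dots,w_4(x)$ (each word carries its displayed variable once, whence each $w_i$ is a composition of quasigroup translations and hence a bijection), and it is the reading under which the corollary is actually applied in the proof of Theorem~\ref{tcgi}; but since you correctly identify this bijectivity as the load-bearing step, it deserves an explicit sentence rather than the inference ``exactly twice across the identity, hence exactly once in each word.''
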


Recall, that a variable is \emph{quadratic} in an identity, if it has exactly two appearances in this identity. An identity is called \emph{quadratic}, if all variables are quadratic. If a quasigroup  $(Q;\cdot)$ is isotopic to a parastrophe of a quasigroup $(Q;\circ)$, then $(Q;\cdot)$ and $(Q;\circ)$ are called \emph{isostrophic}.

The given below Theorem~\ref{tauto} and its Corollary~\ref{cauto} are well known and can be found in many articles, for example, in~\cite{bil1967}, \cite{sokha19952}.

\begin{theorem}\label{tauto}
A triple $(\alpha,\beta,\gamma)$ of permutations of a set $Q$ is an autotopism of a group $(Q,+)$ if and only if there exists an automorphism $\theta$ of $(Q,+)$ and elements $b,c\in Q$ such that
$$
\alpha=L_{c}R_{b^{-1}}\theta, \quad \beta=L_{b}\theta, \quad \gamma=L_{c}\theta.
$$
\end{theorem}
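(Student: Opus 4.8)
The plan is to handle the two implications separately, after recording what the autotopism condition means concretely. By the isotope definition specialized to $(Q;+)$ itself, a triple $(\alpha,\beta,\gamma)$ is an autotopism of $(Q;+)$ exactly when $x+y=\gamma^{-1}(\alpha x+\beta y)$, i.e.
\[
\gamma(x+y)=\alpha x+\beta y\qquad\text{for all }x,y\in Q.
\]
Everything reduces to analyzing this single functional equation, writing $L_{c}x=c+x$ and $R_{b^{-1}}x=x+b^{-1}$ (with $b^{-1}$ the group inverse of $b$) and composing maps so that the rightmost acts first.

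For the ``if'' direction I would simply substitute $\alpha=L_{c}R_{b^{-1}}\theta$, $\beta=L_{b}\theta$, $\gamma=L_{c}\theta$ and compute, using that $\theta$ is an automorphism, hence $\theta(x+y)=\theta x+\theta y$. Then $\alpha x+\beta y=(c+\theta x+b^{-1})+(b+\theta y)=c+\theta x+\theta y=c+\theta(x+y)=\gamma(x+y)$, where the inner $b^{-1}+b$ cancels. Since $\theta$ and the translations are bijections, so are $\alpha,\beta,\gamma$, and we obtain an autotopism.

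For the ``only if'' direction the idea is to read off $b$, $c$ and $\theta$ from the values of $\beta$ and $\gamma$ at the neutral element. Putting $y=0$ and then $x=0$ in the identity, and writing $a:=\alpha0$, $b:=\beta0$, $c:=\gamma0$, I get $\alpha x=\gamma x+b^{-1}$ and $\beta y=a^{-1}+\gamma y$, together with $c=a+b$. Substituting these back into the autotopism identity makes the free terms collapse — here non-commutativity causes no trouble, since $b^{-1}+a^{-1}=(a+b)^{-1}=c^{-1}$ — leaving the relation $\gamma(x+y)=\gamma x-c+\gamma y$. I would then set $\theta:=L_{c}^{-1}\gamma$, so that $\gamma x=c+\theta x$; the displayed relation forces $\theta(x+y)=\theta x+\theta y$, while $\theta0=0$, so $\theta$ is an automorphism. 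Back-substitution gives $\gamma=L_{c}\theta$ directly, then $\alpha x=\gamma x+b^{-1}=c+\theta x+b^{-1}=L_{c}R_{b^{-1}}\theta x$, and finally $\beta y=a^{-1}+\gamma y=a^{-1}+c+\theta y=b+\theta y=L_{b}\theta y$, using $a^{-1}+c=b$.

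The calculations are routine; the only point demanding care is the bookkeeping of left versus right translations in a possibly non-abelian group, which is precisely why $\alpha$ carries both an $L_{c}$ and an $R_{b^{-1}}$ whereas $\beta$ and $\gamma$ carry only a left translation. The one genuinely load-bearing choice, and the main (if minor) obstacle, is the normalization of $\theta$: taking $\theta=L_{c}^{-1}\gamma$ is what turns $\gamma x=c+\theta x$ into $\gamma=L_{c}\theta$ immediately and then lets the expressions for $\alpha$ and $\beta$ fall out with the stated translations simultaneously; a different normalization would force an awkward conjugation and obscure the statement.
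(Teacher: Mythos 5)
Your proof is correct. Note, however, that the paper does not actually prove Theorem~\ref{tauto} — it states it as well known and cites Belousov and Sokhatsky — so there is no in-paper argument to compare against; your derivation (evaluating the autotopism identity $\gamma(x+y)=\alpha x+\beta y$ at the neutral element to extract $a=\alpha0$, $b=\beta0$, $c=\gamma0=a+b$, then normalizing $\theta=L_{c}^{-1}\gamma$) is the standard argument found in those references. The non-abelian bookkeeping all checks out, in particular $b^{-1}+a^{-1}=c^{-1}$ and $a^{-1}+c=b$, which is exactly what makes $\beta$ a pure left translation of $\theta$ while $\alpha$ needs both $L_{c}$ and $R_{b^{-1}}$.
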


\begin{corollary}\label{cauto}
 Let $\alpha$, $\beta_{1}$, $\beta_{2}$, $\beta_{3}$, $\beta_{4}$ be permutations of a set $Q$ besides $\alpha$ is a unitary transformation of a group $(Q,+)$ and let
$$
\alpha(\beta_{1}x+\beta_{2}y)=\beta_{3}u+\beta_{4}v,
$$
where $\{x,y\}=\{u,v\}$ holds for all $x,y\in Q$. Then the following statements are true:
\begin{enumerate}
  \item $\alpha$ is an automorphism of $(Q,+)$, if $u=x,\;v=y$;
  \item $\alpha$ is an anti-automorphism of $(Q,+)$, if $u=y,\;v=x$.
\end{enumerate}
\end{corollary}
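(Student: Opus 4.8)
The plan is to reduce the displayed relation to an autotopism of $(Q,+)$ and then invoke Theorem~\ref{tauto}, using the unitarity $\alpha(0)=0$ to strip off the left-translation factor. Since $\beta_{1}$ and $\beta_{2}$ are permutations, I would first substitute $u:=\beta_{1}x$ and $w:=\beta_{2}y$; as $x,y$ range over $Q$ so do $u,w$, and the hypothesis becomes
$$
\alpha(u+w)=\gamma u+\delta w\quad\text{(statement 1)}\qquad\text{or}\qquad \alpha(u+w)=\delta w+\gamma u\quad\text{(statement 2)},
$$
where $\gamma:=\beta_{3}\beta_{1}^{-1}$, $\delta:=\beta_{4}\beta_{2}^{-1}$ in the first case and $\gamma:=\beta_{4}\beta_{1}^{-1}$, $\delta:=\beta_{3}\beta_{2}^{-1}$ in the second.

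For statement 1 ($u=x$, $v=y$), the relation $\alpha(u+w)=\gamma u+\delta w$ says precisely that the triple $(\gamma,\delta,\alpha)$ is an autotopism of $(Q,+)$ in the convention $f(u)+g(w)=h(u+w)$ underlying Theorem~\ref{tauto} (I would quickly confirm this convention against the stated formulas, noting that $L_{c}R_{b^{-1}}\theta(x)+L_{b}\theta(y)=c+\theta x+\theta y=L_{c}\theta(x+y)$). By that theorem there are an automorphism $\theta$ and $b,c\in Q$ with $\gamma=L_{c}R_{b^{-1}}\theta$, $\delta=L_{b}\theta$ and $\alpha=L_{c}\theta$. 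Evaluating $\alpha=L_{c}\theta$ at $0$ and using $\alpha(0)=0$ and $\theta(0)=0$ gives $c=\alpha(0)=0$, whence $L_{c}$ is the identity and $\alpha=\theta$ is an automorphism, as required.

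For statement 2 ($u=y$, $v=x$), the right-hand side appears in the swapped order $\delta w+\gamma u$, so I would pass to the inversion map $J\colon x\mapsto -x$, which is an anti-automorphism obeying $-(a+b)=(-b)+(-a)$. Applying $J$ to $\alpha(u+w)=\delta w+\gamma u$ yields
$$
J\alpha(u+w)=-(\delta w+\gamma u)=(J\gamma)(u)+(J\delta)(w),
$$
so that $(J\gamma,J\delta,J\alpha)$ is an autotopism of $(Q,+)$. Theorem~\ref{tauto} then furnishes an automorphism $\theta$ and $b,c$ with $J\alpha=L_{c}\theta$; since $J\alpha(0)=-\alpha(0)=0$, the same evaluation at $0$ forces $c=0$ and hence $J\alpha=\theta$. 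Because $J$ is its own inverse, $\alpha=J\theta$, and the computation $J\theta(x+y)=-(\theta x+\theta y)=(J\theta)(y)+(J\theta)(x)$ shows that $\alpha$ is an anti-automorphism.

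The argument is essentially mechanical once the autotopism is identified; the only point demanding genuine care is the noncommutative bookkeeping in statement 2, namely the order-reversing identity $-(a+b)=(-b)+(-a)$, which is exactly what converts the swapped sum $\delta w+\gamma u$ into a true autotopism after applying $J$. I expect this to be the main (and only mild) obstacle; everything else follows directly from Theorem~\ref{tauto} together with the condition $\alpha(0)=0$.
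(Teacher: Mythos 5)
Your proof is correct. The paper itself gives no proof of this corollary --- it is stated as ``well known'' with references to \cite{bil1967} and \cite{sokha19952} --- but your derivation is exactly the natural one the paper implicitly relies on: absorb $\beta_{1},\beta_{2}$ into the variables to exhibit an autotopism, apply Theorem~\ref{tauto}, and use $\alpha(0)=0$ to kill the translation $L_{c}$; the order-reversing identity $-(a+b)=(-b)+(-a)$ correctly handles the swapped case. (Only cosmetic quibble: you reuse the letter $u$ for $\beta_{1}x$, which clashes with the $u$ of the statement.)
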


A quasigroup is \emph{a linear group isotope}~\cite{bil1966}, if there exists a group $(Q;+)$, its automorphisms $\varphi$, $\psi$, an arbitrary element $c$ such that for all $x,y\in Q$
$$
x\cdot y=\varphi x+c+\psi y.
$$

T.~Kepka, P.~Nemec~\cite{kepnem1971I,kepnem1971II} introduced the concept of $T$-quasigroups and studied their properties, namely the class of all $T$-quasigroups is a variety. $T$-quasigroups, sometimes, are called \emph{central} quasigroups. Central quasigroups are precisely
the abelian quasigroups in the sense of universal algebra \cite{szen1998}. A \emph{$T$-quasigroup} is a linear isotope of an abelian group and, according to Toyoda-Bruck theorem~\cite{toyoda1941,bruck1944}, it is \emph{medial} if and only if coefficients of its canonical decompositions commute. A \emph{medial quasigroup}~\cite{bil1967} is a quasigroup defined by the identity of mediality
$$
xy\cdot uv=xu\cdot yv.
$$

\section{Isotopes of groups}
\indent

In this section, criteria for group isotopes to have symmetry properties are given and a classification of group isotopes according to their symmetry groups is described.

\subsection{Classification of group isotopes}

The criteria for group isotopes to be commutative, left symmetric and  right symmetric are found by O.~Kirnasovsky~\cite{kirnasov1995}. The criteria of total symmetry, semi-symmetry and asymmetry are announced by the author in~\cite{krain2014}.

The following theorem systematizes all criteria on symmetry and implies a classification of group isotopes according to their symmetry groups.

\begin{theorem}\label{tcgi}
Let $(Q;\cdot)$ be a group isotope and (\ref{cdec}) be its canonical decomposition, then
\begin{enumerate}
 \item \label{cis} $(Q;\cdot)$ is commutative if and only if $(Q;+)$ is abelian and $\beta=\alpha$;
 \item \label{lis} $(Q;\cdot)$ is left symmetric if and only if $(Q;+)$ is abelian and $\beta=-\iota$;
\item \label{ris} $(Q;\cdot)$ is right symmetric  if and only if $(Q;+)$ is abelian and $\alpha=-\iota$;
\item \label{tis} $(Q;\cdot)$ is totally symmetric  if and only if $(Q;+)$ is abelian and $\alpha=\beta=-\iota$;
\item \label{sis} $(Q;\cdot)$ is semi-symmetric if and only if  $\alpha$ is an anti-automorphism of $(Q;+)$,\\
  $\beta=\alpha^{-1}$, $\alpha^{3}=-I_{a}^{-1}$, $\alpha{a}=-a$, where $I_{a}(x):=-a+x+a$;
\item \label{ais} $(Q;\cdot)$ is asymmetric if and only if $(Q;+)$ is not abelian or $-\iota\neq \alpha\neq \beta\neq -\iota$ and at least one of the following conditions is true: $\alpha$ is not an anti-automorphism, $\beta\neq\alpha^{-1}$, $\alpha^{3}\neq -I_{a}^{-1}$, $\alpha{a}\neq -a$.
\end{enumerate}
\end{theorem}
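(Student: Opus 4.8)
The plan is to compute all six parastrophe operations explicitly from the canonical decomposition $x\cdot y=\alpha x+a+\beta y$ and then read off each criterion by turning the membership condition $(\cdot)=(\stackrel{\sigma}{\cdot})$ into a functional identity in $x,y$. Inverting the defining relation $\alpha x_{1}+a+\beta x_{2}=x_{3}$ for $x_{1}$, for $x_{2}$, and swapping the arguments gives $x\stackrel{s}{\cdot}y=\alpha y+a+\beta x$, $x\stackrel{\ell}{\cdot}y=\alpha^{-1}(x-\beta y-a)$, $x\stackrel{r}{\cdot}y=\beta^{-1}(-a-\alpha x+y)$, $x\stackrel{s\ell}{\cdot}y=\alpha^{-1}(y-\beta x-a)$ and $x\stackrel{sr}{\cdot}y=\beta^{-1}(-a-\alpha y+x)$. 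Since $\mathrm{Sym}(\cdot)$ is a subgroup of $S_{3}$, it suffices to test one generator per target subgroup: $s$ for commutativity, $r$ for left symmetry, $\ell$ for right symmetry, and $s\ell$ for semi-symmetry (as $sr=(s\ell)^{2}$); total symmetry is then the conjunction of commutativity and right symmetry, and asymmetry the simultaneous failure of all of these.

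For the four \emph{commutative-type} items the argument is uniform: substitute $x=0$ and $y=0$ to pin down one coefficient, then feed the result back into the full identity. For commutativity, $(\cdot)=(\stackrel{s}{\cdot})$ reads $\alpha x+a+\beta y=\alpha y+a+\beta x$; the slices force $\beta=I_{a}\alpha$, and resubstitution collapses the identity to $\alpha x+\alpha y=\alpha y+\alpha x$, i.e. $(Q;+)$ is abelian, whence $I_{a}=\iota$ and $\beta=\alpha$. (Alternatively, $(\stackrel{s}{\cdot})$ is the $0$-canonical decomposition of a $(Q;+^{\mathrm{op}})$-isotope with coefficients $(\beta,\alpha)$, so Theorem~\ref{tcdec} forces $+=+^{\mathrm{op}}$ and $\alpha=\beta$.) For left symmetry, $(\cdot)=(\stackrel{r}{\cdot})$ at $y=0$ gives $\beta(u+a)=-(u+a)$ for all $u$, i.e. $\beta=-\iota$, and resubstitution yields abelianness exactly as before. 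Right symmetry is dual, giving $\alpha=-\iota$ and abelianness, and total symmetry is their conjunction, giving $(Q;+)$ abelian with $\alpha=\beta=-\iota$; each converse is an immediate check.

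The semi-symmetric case carries the real work and is the only one not forcing abelianness. Writing $(\cdot)=(\stackrel{s\ell}{\cdot})$ and applying $\alpha$ yields the master identity $\alpha(\alpha x+a+\beta y)=y-\beta x-a$. Its diagonal slice $x=y=0$ gives $\alpha a=-a$, and the slice $x=0$ gives $\alpha(a+\beta y)=y-a$, which we solve as $\beta y=-a+\alpha^{-1}(y-a)$, expressing $\beta$ through $\alpha$. The main obstacle is the free member $a$: it sits between the two argument-terms and blocks a direct appeal to Corollary~\ref{cauto}. The device that removes it is to substitute this expression for $\beta$ into the master identity and then reparametrise by $s:=\alpha x$, $t:=\alpha^{-1}(y-a)$; the constant is absorbed and the identity becomes $\alpha(s+t)=\alpha t+g(s)$ for a unitary permutation $g$. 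This is exactly the hypothesis of Corollary~\ref{cauto} with the two arguments interchanged, so $\alpha$ is an anti-automorphism of $(Q;+)$. Once $\alpha$ is anti-automorphic, $\alpha^{-1}(y-a)=a+\alpha^{-1}y$ (using $\alpha a=-a$), hence $\beta=\alpha^{-1}$; and feeding anti-automorphicity into the $y=0$ slice $\alpha(\alpha x+a)=-\beta x-a$ turns it into $\alpha^{2}x=a-\alpha^{-1}x-a$, which after composing with $\alpha$ becomes $\alpha^{3}=-I_{a}^{-1}$. Conversely, these four conditions make every step reversible, so the identity holds.

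Finally, asymmetry is the Boolean complement of the previous items: a subgroup of $S_{3}$ is trivial precisely when it contains none of $s,\ell,r$ and does not contain $A_{3}$, so $(Q;\cdot)$ is asymmetric iff it is at once non-commutative, non-left-symmetric, non-right-symmetric and non-semi-symmetric (failure of total symmetry is then automatic). Negating items~\ref{cis}, \ref{lis}, \ref{ris} and distributing, the conjunction of the first three negations factors as $[\,(Q;+)\text{ is not abelian}\,]$ or $[\,-\iota\neq\alpha\neq\beta\neq-\iota\,]$, where the chain abbreviates $\alpha\neq-\iota$, $\alpha\neq\beta$, $\beta\neq-\iota$; conjoining with the negation of item~\ref{sis} reproduces the stated criterion verbatim. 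I expect the semi-symmetric step to be the only genuine difficulty — precisely the absorption of the free member $a$ — while everything else is bookkeeping once the parastrophes are in hand.
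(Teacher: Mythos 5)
Your proposal is correct and follows essentially the same route as the paper: reduce each symmetry condition to a functional identity via the canonical decomposition, specialize at $0$ to pin down the coefficients, invoke Corollary~\ref{cauto} for the anti-automorphism in the semi-symmetric case, and treat asymmetry as the Boolean complement. The only (harmless) deviations are that you derive abelianness in items~\ref{cis})--\ref{ris}) by direct resubstitution instead of citing Corollary~\ref{cigc} or the auto-/anti-automorphism argument, and your reparametrisation absorbing the free member $a$ before applying Corollary~\ref{cauto} is just a more explicit version of the paper's absorption of constants into the permutations $\beta_{i}$.
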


\begin{proof} {\it\ref{cis})} Let a group isotope $(Q;\cdot)$ be commutative, e.i., the identity $xy=yx$ holds. Using its canonical decomposition (\ref{cdec}), we have
$$
\alpha{x}+a+\beta{y}=\alpha{y}+a+\beta{x}.
$$
Corollary~\ref{cigc} implies that $(Q;+)$ is an abelian  group. When $x=0$, we obtain $\alpha=\beta$.

Conversely, let $(Q;+)$ be an abelian group and $\beta=\alpha$, then
 $$
 x\cdot y=\alpha{x}+a+\alpha{y}=\alpha{y}+a+\alpha{x}=y\cdot x.
 $$
Thus, $(Q;\cdot)$ is a commutative quasigroup.

 {\it\ref{lis})} Let a group isotope $(Q;\cdot)$ be left symmetric, e.i., the identity $xy\cdot y=x$ holds. Using its canonical decomposition (\ref{cdec}), we have
 $$
 \alpha(\alpha{x}+a+\beta y)+a+\beta y=x.
 $$
Replacing $a+\beta y$ with $y$, we obtain $\alpha(\alpha{x}+y)=x-y$. Corollary~\ref{cauto} implies that $\alpha$ is an automorphism. When $x=0$, we have $\alpha y=-y$, i.e.,  $\alpha=-\iota$. Since $\alpha$ is an automorphism and an anti-automorphism, then $(Q;+)$ is an abelian group.

Conversely, suppose that the conditions of  {\it\ref{lis})} are performed, then $(Q;\cdot)$ is a left symmetric quasigroup. Indeed,
  $$
  xy\cdot y=-(-x+\beta y)+\beta y=x-\beta y+\beta y=x.
  $$

The proof of {\it\ref{ris})} is similar to {\it\ref{lis})}. The point {\it\ref{tis})} follows from {\it\ref{lis})} and {\it\ref{ris})}.

{\it\ref{sis})} Let a group isotope $(Q;\cdot)$ be semi-symmetric, e.i., the identity $x\cdot yx=y$ holds. Using (\ref{cdec}), we have
$$
\alpha x+a+\beta(\alpha y+a+\beta x)=y,
$$
hence,
$$
\beta(\alpha y+a+\beta x)=-a-\alpha x+y.
$$
Corollary~\ref{cauto} implies that  $\beta$ is an anti-automorphism of $(Q;+)$, therefore
\begin{equation}\label{dd1}
\beta^{2}x+\beta a+\beta\alpha y=-a-\alpha x+y.
\end{equation}
When $x=y=0$, we obtain $\beta a=-a$ and, when $x=0$, we have $\beta\alpha=\iota$, i.e.,
 $\beta=\alpha^{-1}$. Substitute the obtained relations in (\ref{dd1}):
$$
\alpha^{-2}x-a+y=-a-\alpha x+y.
$$
Reducing $y$ on the right in the equality and replacing $x$ with $\alpha^{2}x$, we have $x-a=-a-\alpha^{3}x$,
wherefrom $-\alpha^{3}x=a+x-a$ that is $\alpha^{3}=-I_{a}^{-1}$.

Conversely, suppose that the conditions of {\it\ref{sis})} hold. A quasigroup $(Q;\cdot)$ defined by $x\cdot y:=\alpha x+a+\alpha^{-1}y$
is semi-symmetric. Indeed,
$$
x\cdot yx=\alpha x+a+\alpha^{-1}(\alpha y+a+\alpha^{-1}x).
$$
Since $\alpha$ is an anti-automorphism of the $(Q;\cdot)$, then
$$
x\cdot yx=\alpha x+a+\alpha^{-2}x+\alpha^{-1}a+y.
$$
$\alpha^{3}=-I_{a}^{-1}$ implies $a+\alpha^{-2}x=-\alpha x+a$, so,
$$
x\cdot yx=\alpha x-\alpha x+a+\alpha^{-1}a+y=a+\alpha^{-1}a+y.
$$
Because $\alpha^{-1}a=-a$, then $x\cdot yx=y$. Thus, $(Q;\cdot)$ is semi-symmetric.

{\it\ref{ais})} Asymmetricity of a group isotope means that it is neither commutative, nor left symmetric, nor right symmetric, nor semi-symmetric. It means that all conditions {\it\ref{cis})}--{\it\ref{tis})} are false. Falsity of {\it\ref{cis})} implies falsity of {\it\ref{tis})}. Falsity of {\it\ref{sis}) } is equivalent to fulfillment of at least one of the conditions: $\alpha$ is not an anti-automorphism, $\beta\neq \alpha^{-1}$, $\alpha^{3}\neq -I_{a}^{-1}$, $\alpha(a)\neq -a$. Falsity of {\it\ref{cis})}, {\it\ref{lis})}, {\it\ref{ris})} means that $(Q;+)$ is noncommutative or each of the following inequalities $\beta\neq\alpha$, $\beta\neq -\iota$, $\alpha\neq -\iota$ is true. Thus, {\it\ref{ais})} has been proved.
\end{proof}

From Theorem~\ref{tcgi}, we can deduce the corollary for the classification of group isotopes over a noncommutative group.

\begin{corollary}\label{cisnotcom}
An isotope of a noncommutative group is either semi-symmetric or asymmetric.
\end{corollary}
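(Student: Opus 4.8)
The plan is to read the result straight off Theorem~\ref{tcgi} by exploiting a structural asymmetry in its six criteria: the first four each carry the standing requirement that the decomposition group $(Q;+)$ be abelian, whereas the semi-symmetry criterion (item~\ref{sis}) does not. First I would fix a canonical decomposition $x\cdot y=\alpha x+a+\beta y$, which exists by Theorem~\ref{tcdec}, and take $(Q;+)$ to be the given noncommutative decomposition group.

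Next I would recall that the six symmetry types correspond exactly to the six subgroups of $S_{3}$, so every quasigroup falls into precisely one of them, and it therefore suffices to exclude four of the six possibilities. Since $(Q;+)$ is not abelian, none of Theorem~\ref{tcgi}(\ref{cis}), (\ref{lis}), (\ref{ris}), (\ref{tis}) can hold, because each of these would force $(Q;+)$ to be abelian. Hence $(Q;\cdot)$ is neither commutative, nor left symmetric, nor right symmetric, nor totally symmetric.

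In the language of symmetry groups this says that the subgroups of $S_{3}$ attached to the four excluded classes are precisely those containing a transposition ($s$, $\ell$ or $r$), so $\mathrm{Sym}(\cdot)$ can contain no transposition. The only subgroups of $S_{3}$ meeting this constraint are the trivial group $\{\iota\}$ and the alternating group $A_{3}$, which correspond respectively to asymmetry and semi-symmetry. Therefore $(Q;\cdot)$ is either semi-symmetric or asymmetric, as claimed.

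The argument presents no genuine obstacle: it is a purely logical consequence of the classification in Theorem~\ref{tcgi}. The only point deserving a word of care is that semi-symmetry (item~\ref{sis}) really is compatible with a noncommutative $(Q;+)$ — its criterion demands only that $\alpha$ be an anti-automorphism, which noncommutative groups do admit — so the semi-symmetric alternative is not vacuous and the dichotomy is the sharpest one available.
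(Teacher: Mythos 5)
Your proposal is correct and follows essentially the same route as the paper: both read the dichotomy off Theorem~\ref{tcgi} by observing that the criteria for commutativity, left symmetry, right symmetry and total symmetry all force $(Q;+)$ to be abelian, leaving only the semi-symmetric and asymmetric alternatives. Your extra remark that the excluded symmetry groups are exactly those containing a transposition, so that $\mathrm{Sym}(\cdot)$ must be $\{\iota\}$ or $A_{3}$, is a clean way of making the final step explicit.
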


\begin{proof} Theorem~\ref{tcgi} implies that an isotope of a noncommutative group can be semi-symmetric or asymmetric. A group isotope can not be asymmetric and semi-symmetric simultaneously, because according to definition, a symmetry group of a semi-symmetric quasigroup is $A_{3}$ or $S_{3}$ and a symmetry group of an asymmetric quasigroup is $\{\iota\}$.
\end{proof}

\begin{corollary}\label{cismed}
Commutative, left symmetric, right symmetric and totally sym\-met\-ric linear isotopes of a group are medial quasigroups.
\end{corollary}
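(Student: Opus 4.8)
The plan is to reduce the statement to the Toyoda--Bruck criterion for mediality by way of Theorem~\ref{tcgi}. First I would observe that for a linear group isotope the canonical coefficients are genuine automorphisms: writing $x\cdot y=\varphi x+c+\psi y$ with $\varphi,\psi$ automorphisms, the maps $\varphi,\psi$ already fix the neutral element $0$, so by the uniqueness asserted in Theorem~\ref{tcdec} this representation is the $0$-canonical decomposition and $\alpha=\varphi$, $\beta=\psi$, $a=c$. Thus $\alpha$ and $\beta$ are automorphisms of $(Q;+)$ throughout the argument.

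Next I would apply Theorem~\ref{tcgi}. In each of the four cases --- commutativity (item~\ref{cis}), left symmetry (item~\ref{lis}), right symmetry (item~\ref{ris}) and total symmetry (item~\ref{tis}) --- one of the equivalent conditions supplied by the theorem is that the decomposition group $(Q;+)$ is abelian. Hence $(Q;\cdot)$ is a linear isotope of an abelian group, i.e. a $T$-quasigroup, and by the Toyoda--Bruck theorem it is medial if and only if its canonical coefficients commute. So the entire argument reduces to verifying $\alpha\beta=\beta\alpha$ in each of the four situations.

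This verification is where the single nontrivial point resides, namely the fact that $-\iota$ is central in the automorphism group of an abelian group, since every endomorphism $\theta$ satisfies $\theta(-x)=-\theta(x)$, that is $\theta(-\iota)=(-\iota)\theta$. Indeed, commutativity gives $\beta=\alpha$, which trivially commutes with itself; left symmetry gives $\beta=-\iota$, whence $\alpha(-\iota)=(-\iota)\alpha$; right symmetry gives $\alpha=-\iota$ by the symmetric argument; and total symmetry gives $\alpha=\beta=-\iota$. In all four cases $\alpha$ and $\beta$ commute, so Toyoda--Bruck yields mediality. The main obstacle is therefore conceptual rather than computational --- recognizing that each of these symmetry hypotheses forces $(Q;+)$ to be abelian, so that the $T$-quasigroup machinery becomes available, together with the centrality of $-\iota$ among automorphisms; once these two observations are in place the conclusion is immediate.
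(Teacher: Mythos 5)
Your proposal is correct and follows essentially the same route as the paper: the paper's one-line proof likewise observes that in each of the four cases Theorem~\ref{tcgi} forces the decomposition group to be abelian, that linearity makes both coefficients automorphisms, and that these coefficients commute, which is exactly the Toyoda--Bruck criterion for mediality stated in the preliminaries. You merely make explicit the detail the paper leaves implicit, namely that the coefficients commute because they are either equal or one of them is $-\iota$, which is central among automorphisms of an abelian group.
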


\begin{proof} The corollary follows from Theorem~\ref{tcgi}, because in every of these cases the decomposition group of a canonical decomposition of a group isotope is commutative and both of its coefficients are automorphisms according to assumptions and they commute.
\end{proof}

\begin{corollary}\label{cisnotmed}
A nonmedial linear isotope of an arbitrary group is either semi-symmetric or asymmetric.
\end{corollary}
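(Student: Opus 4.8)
The plan is to argue by exclusion, using the partition of quasigroups into six blocks by their symmetry group together with Corollary~\ref{cismed}. Let $(Q;\cdot)$ be a nonmedial linear isotope of a group. Its symmetry group $\mathrm{Sym}(\cdot)$ is a subgroup of $S_{3}$, and I would classify the possibilities according to whether $\mathrm{Sym}(\cdot)$ contains a transposition. If $\mathrm{Sym}(\cdot)$ contains $s$, $r$ or $\ell$, then by the definitions of the symmetry classes recalled above the quasigroup is respectively commutative, left symmetric or right symmetric; and if $\mathrm{Sym}(\cdot)=S_{3}$ it is totally symmetric, hence enjoys all of these properties at once.

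Next I would invoke Corollary~\ref{cismed}: for a linear isotope each of the properties commutativity, left symmetry, right symmetry and total symmetry forces mediality. Consequently, if $\mathrm{Sym}(\cdot)$ contained any transposition, $(Q;\cdot)$ would be medial, contradicting the hypothesis. Hence $\mathrm{Sym}(\cdot)$ is transposition-free, and the only transposition-free subgroups of $S_{3}$ are $\{\iota\}$ and $A_{3}$. The first case is exactly asymmetry and the second is exactly semi-symmetry, which is precisely the assertion.

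The only point requiring care is that the named varieties overlap (for instance a totally symmetric quasigroup is simultaneously commutative and both-sidedly symmetric), so the argument should be conducted at the level of the subgroup $\mathrm{Sym}(\cdot)\leq S_{3}$ rather than at the level of the varieties themselves: the six subgroups of $S_{3}$ genuinely partition the quasigroups, and the dichotomy ``contains a transposition / is transposition-free'' is unambiguous. I expect no computational obstacle here, since all the substantive content is already packaged in Corollary~\ref{cismed} (which itself rests on the Toyoda--Bruck description of mediality through commuting automorphism coefficients); the present corollary is then merely a short observation about the subgroup lattice of $S_{3}$ combined with that earlier result.
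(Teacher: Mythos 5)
Your proposal is correct and follows essentially the same route as the paper, which simply notes that the claim follows from Theorem~\ref{tcgi} together with Corollary~\ref{cismed}: since each of the four symmetry properties involving a transposition forces mediality for a linear group isotope, a nonmedial one can only have symmetry group $\{\iota\}$ or $A_{3}$. Your extra remark about working at the level of subgroups of $S_{3}$ rather than overlapping varieties is a sensible clarification but not a departure from the paper's argument.
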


\begin{proof} The corollary immediately follows from Theorem~\ref{tcgi} and Corollary~\ref{cismed}.
\end{proof}

\subsection{Classification of isotopes of abelian groups}

An isotope of a nonabelian group is either semi-symmetric or asymmetric (see Corollary~\ref{cisnotcom}). In other words, commutative, left symmetric, right symmetric and totally symmetric group isotopes exist only among isotopes of a commutative group. Consequently, it is advisable to formulate a corollary about classification of isotopes of commutative groups.

\begin{corollary}\label{ciscom}
Let $(Q;\cdot)$ be an isotope of a commutative group and (\ref{cdec}) be its canonical decomposition, then the following conditions are true: {\it\ref{cis})}-{\it\ref{tis})} of Theorem~\ref{tcgi} and
\begin{itemize}
             \item[$5')$] $(Q;\cdot)$ is semi-symmetric if and only if  $\alpha$ is an automorphism of $(Q;+)$,              $\beta=\alpha^{-1}$, $\alpha^{3}=-\iota$, $\alpha{a}=-a$;
             \item[$6')$] $(Q;\cdot)$ is asymmetric if and only if $-\iota\neq\alpha\neq \beta\neq -\iota$ and at least one of the following conditions is true: $\alpha$ is not an automorphism, $\beta\neq\alpha^{-1}$, $\alpha^{3}\neq -\iota$, $\alpha{a}\neq -a$.
           \end{itemize}
\end{corollary}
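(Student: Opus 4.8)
The plan is to derive this corollary directly from Theorem~\ref{tcgi} by specializing every clause to the abelian case, so that almost no fresh computation is needed. Since statements \ref{cis})--\ref{tis}) of Theorem~\ref{tcgi} already carry the hypothesis that $(Q;+)$ is abelian, they transfer verbatim and require no further argument; the only real work concerns the semi-symmetric and asymmetric items, $5')$ and $6')$.

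First I would record two elementary facts about a commutative group $(Q;+)$. The first is that a permutation $\alpha$ is an anti-automorphism of $(Q;+)$ if and only if it is an automorphism: when $(Q;+)$ is abelian the identity $\alpha(x+y)=\alpha(y+x)$ holds trivially, so the two notions coincide. The second is that the inner mapping $I_a(x)=-a+x+a$ collapses to $I_a(x)=x$, whence $I_a=\iota$ and therefore $-I_a^{-1}=-\iota$. These two observations are exactly the places where commutativity of the decomposition group simplifies the general criteria.

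To obtain $5')$ I would take the semi-symmetric criterion \ref{sis}) of Theorem~\ref{tcgi}---that $\alpha$ is an anti-automorphism, $\beta=\alpha^{-1}$, $\alpha^{3}=-I_a^{-1}$ and $\alpha a=-a$---and substitute the two facts above: ``anti-automorphism'' is replaced by ``automorphism'' and $-I_a^{-1}$ is replaced by $-\iota$, producing precisely the conditions listed in $5')$. For $6')$ I would apply the same substitutions to the asymmetric criterion \ref{ais}): under the standing hypothesis that $(Q;+)$ is abelian the disjunct ``$(Q;+)$ is not abelian'' is never satisfied, so it may be dropped, leaving the clause ``$-\iota\neq\alpha\neq\beta\neq-\iota$ together with at least one of the four failures''; rewriting ``$\alpha$ is not an anti-automorphism'' as ``$\alpha$ is not an automorphism'' and ``$\alpha^{3}\neq-I_a^{-1}$'' as ``$\alpha^{3}\neq-\iota$'' yields exactly $6')$.

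There is essentially no obstacle here: the whole content is that commutativity of $(Q;+)$ both identifies automorphisms with anti-automorphisms and trivializes $I_a$. The single point deserving a moment's care is the negated statement $6')$, where one must confirm that excising the now-vacuous disjunct ``$(Q;+)$ is not abelian'' is legitimate under the hypothesis of the corollary, which it is, so that the equivalence in Theorem~\ref{tcgi} specializes without loss.
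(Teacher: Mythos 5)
Your proposal is correct and follows exactly the paper's route: the paper's own proof is the one-line remark that the corollary ``follows from Theorem~\ref{tcgi} taking into account that $(Q;+)$ is commutative,'' and your two observations (anti-automorphisms coincide with automorphisms, and $I_a=\iota$ so $-I_a^{-1}=-\iota$) are precisely the details that remark leaves implicit.
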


\begin{proof} The proof follows from Theorem~\ref{tcgi} taking into account that $(Q;+)$ is commutative.
\end{proof}

The varieties of medial and $T$-quasigroups are very important and investigated subclasses of the variety of all group isotopes. These quasigroups have different names in the scientific literature. For example, medial quasigroups also are called entropic or bisymmetry, and $T$-quasigroups are called central quasigroups~\cite{szen1998}, linear isotopes of commutative groups and etc.

The next statement gives a classification of varieties of $T$-quasigroups according to their symmetry groups.
\begin{corollary}\label{ctqis}
Let $(Q;\cdot)$ be a linear isotope of a commutative group and (\ref{cdec}) be its canonical decomposition, then the following conditions are true: {\it\ref{cis})}-{\it\ref{tis})} of Theorem~\ref{tcgi}, {$5'$)} of Corollary~\ref{ciscom} and
 \begin{itemize}
              \item[$6'')$] $(Q;\cdot)$ is asymmetric if and only if $-\iota\neq\alpha\neq\beta\neq -\iota$ and at least one of the following conditions is true: $\beta\neq\alpha^{-1}$, $\alpha^{3}\neq -\iota$, $\alpha{a}\neq -a$.
           \end{itemize}
\end{corollary}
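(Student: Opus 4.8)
The plan is to obtain this corollary as an immediate specialization of Corollary~\ref{ciscom}, the only new ingredient being the defining property of a \emph{linear} isotope. First I would translate the hypothesis into the language of the canonical decomposition. By definition, a linear isotope of a commutative group comes with an abelian group $(Q;+)$, automorphisms $\varphi,\psi$ and an element $c$ such that $x\cdot y=\varphi x+c+\psi y$. Since every automorphism fixes the neutral element, $\varphi$ and $\psi$ are already unitary permutations of $(Q;+)$, so by the uniqueness asserted in Theorem~\ref{tcdec} the coefficients $\alpha,\beta$ of the canonical decomposition (\ref{cdec}) coincide with $\varphi,\psi$. In particular, $\alpha$ and $\beta$ are automorphisms of $(Q;+)$. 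This is the single fact that distinguishes the present situation from that of Corollary~\ref{ciscom}.

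Because $(Q;+)$ is commutative, Corollary~\ref{ciscom} applies verbatim and already delivers items \ref{cis})--\ref{tis}) of Theorem~\ref{tcgi} together with the semi-symmetry criterion $5')$ and the asymmetry criterion $6')$. Items \ref{cis})--\ref{tis}) and $5')$ are carried over unchanged, so nothing remains to verify for them. The whole content of the corollary therefore reduces to the passage from $6')$ to $6'')$. Comparing the two disjunctions of ``defeating'' conditions, they are identical except that $6')$ contains the additional clause ``$\alpha$ is not an automorphism''. Under the linearity hypothesis $\alpha$ \emph{is} an automorphism, so this clause is never satisfied; deleting an always-false disjunct from a disjunction produces a logically equivalent condition. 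Hence $6')$ collapses exactly to $6'')$, with the surrounding conjunct $-\iota\neq\alpha\neq\beta\neq-\iota$ left untouched.

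There is essentially no computational obstacle here: once linearity has been rephrased as ``$\alpha$ is an automorphism'', the argument is pure bookkeeping on top of Corollary~\ref{ciscom}. The one step that genuinely deserves care is precisely that rephrasing. One must be certain that the automorphisms $\varphi,\psi$ appearing in the definition of a linear isotope are literally the canonical coefficients $\alpha,\beta$, and not merely related to them by the recentring that produces a canonical decomposition; this identification is exactly what the uniqueness in Theorem~\ref{tcdec} guarantees, and it is the only place where a sloppy argument could fail.
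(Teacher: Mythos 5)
Your proposal is correct and matches the paper's argument, which simply states that the corollary ``immediately follows from Corollary~\ref{ciscom}''; your elaboration — that linearity forces the canonical coefficients to be automorphisms (via the uniqueness in Theorem~\ref{tcdec}), so the disjunct ``$\alpha$ is not an automorphism'' in $6')$ is vacuous and drops out to give $6'')$ — is exactly the intended reasoning.
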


\begin{proof} This theorem immediately follows from Corollary~\ref{ciscom}.
\end{proof}

\begin{corollary}\label{cisnotmed}
Every nonmedial T-quasigroup is asymmetric.
\end{corollary}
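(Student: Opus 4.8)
The plan is to show that every one of the five symmetry properties forces the coefficients of the canonical decomposition to commute, so that by the Toyoda--Bruck theorem the quasigroup is medial; contrapositively, a nonmedial $T$-quasigroup admits no nontrivial symmetry and is therefore asymmetric.

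First I would recall that a $T$-quasigroup is a linear isotope of a commutative group, so in its canonical decomposition $x\cdot y=\alpha x+a+\beta y$ both coefficients $\alpha$ and $\beta$ are automorphisms of the abelian group $(Q;+)$, and that, according to the Toyoda--Bruck theorem, mediality is equivalent to $\alpha\beta=\beta\alpha$.

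Then I would run through the five symmetry properties using the criteria of Corollary~\ref{ctqis} (equivalently Theorem~\ref{tcgi}). Commutativity gives $\beta=\alpha$, whence $\alpha\beta=\alpha^{2}=\beta\alpha$. Left symmetry gives $\beta=-\iota$ and right symmetry gives $\alpha=-\iota$; since $-\iota$ commutes with every automorphism of an abelian group (because $\alpha(-x)=-\alpha x$), once more $\alpha\beta=\beta\alpha$. Total symmetry is the case $\alpha=\beta=-\iota$, which plainly commute. Finally, semi-symmetry gives $\beta=\alpha^{-1}$, whence $\alpha\beta=\iota=\beta\alpha$. Thus each of the five symmetry properties implies $\alpha\beta=\beta\alpha$, that is, mediality.

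To conclude, since a nonmedial $T$-quasigroup has $\alpha\beta\neq\beta\alpha$, it possesses none of the five symmetry properties, so by the classification its symmetry group is $\{\iota\}$; that is, the quasigroup is asymmetric. I expect the only subtlety to be the left- and right-symmetric cases, where one must observe that $-\iota$ is central in the automorphism group of an abelian group; every remaining case is immediate from the explicit form of the coefficients.
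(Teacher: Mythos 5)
Your proof is correct and follows the route the paper intends: the paper states this corollary without proof, but it sits immediately after Corollary~\ref{ctqis} and is meant to follow from it together with Corollary~\ref{cismed} (symmetry forces commuting automorphism coefficients, hence mediality by Toyoda--Bruck). The only step you add explicitly --- that in the semi-symmetric case $\beta=\alpha^{-1}$ commutes with $\alpha$, and that $-\iota$ is central among automorphisms of an abelian group in the left/right-symmetric cases --- is exactly the content the paper leaves implicit, so your write-up is a faithful filling-in of the same argument.
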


In other words, if a T-quasigroup is not asymmetric, then it is medial.

\section{Linear isotopes of finite cyclic groups}

A full description of all $n$-ary linear isotopes of cyclic groups up to an isomorphism is given by F.~Sokhatsky and  P.~Syvakivskyj~\cite{sokhasyvak1994}. All pairwise non-isomorphic group isotopes up to order 15 and a criterion of their existence are established by O.~Kirnasovsky~\cite{kirnasov1995}. Some algebraic properties of non-isomorphic quasigroups are studied by L.~Chiriac, N.~Bobeica and D.~Pavel~\cite{chbp2014} using the computer.

Let $Q$ be a set and $\mathrm{Is}(+)$ be a set of all isotopes of a group $(Q;+)$. Theorem~\ref{tcgi} does not give a partition of $\mathrm{Is}(+)$. But it is easy to see that only totally symmetric quasigroups are common for two classes of symmetric quasigroups, i.e., of group isotopes being not asymmetric.

To emphasize that a group isotope is not totally symmetric we add the word `\textit{strictly}'. For example, the term `\textit{a strictly commutative group isotopes}' means that it is commutative, but not totally symmetric.

Theorem~\ref{tcgi} implies that the conditions of exclusion of totally symmetric quasigroups from a set of group isotopes are the following: coefficients of its canonical decomposition do not equal $-\iota$ simultaneously. Generally speaking, a set $\mathrm{Is}(+)$ is parted into six subsets.

\begin{center}
\begin{tabular}{|c|c|c|c|}\hline
\mip{A group isotope $(Q;\cdot)$} &  \mad{its symmetry group\smallskip}  &\med{ conditions of its\\ canonical decomposition (\ref{cdec}) }   \\ \hline
\mip{\smallskip is strictly \\commutative\smallskip}& $\{\iota,s\}$ & \med{$(Q;+)$ is abelian, $\beta=\alpha\neq -\iota$}\\ \hline
\mip{\smallskip is strictly left \\symmetric\smallskip} & $\{\iota,r\}$ & \med{$(Q;+)$ is abelian, $\beta=-\iota\neq\alpha$}\\ \hline
\mip{\smallskip is strictly right symmetric\smallskip}&  $\{\iota,\ell\}$ & \med{$(Q;+)$ is abelian, $\alpha=-\iota\neq\beta$}  \\ \hline
\mip{is strictly \\semi-symmetric} & $A_{3}$ & \med{$\alpha$ is an anti-automorphism of $(Q;+)$, $\beta=\alpha^{-1}$, $\alpha{a}=-a$, $\alpha^{3}=-I_{a}^{-1}$, where $I_{a}(x):=-a+x+a$, $(Q;+)$ is non-abelian or $\alpha\neq -\iota$} \\ \hline
\mip{is totally \\symmetric} & $S_{3}$  & \med{$(Q;+)$ is abelian $\alpha=\beta=-\iota$} \\ \hline
\mip{is asymmetric}&  $\{\iota\}$ & \med{$\alpha$ is not an anti-automorphism of $(Q;+)$, $\beta\neq\alpha^{-1}$, $\alpha^{3}\neq -I_{a}^{-1}$, $\alpha{a}\neq -a$ and $(Q;+)$ is non-abelian or $-\iota\neq \alpha\neq \beta\neq -\iota$.}  \\ \hline
\end{tabular}
\end{center}
\begin{center}
Table 1. A partition of group isotopes.
\end{center}

Consider the set of all linear isotopes of a finite cyclic group. Their up to isomorphism description has been found by F.~Sokhatsky and P.~Syvakivsky~\cite{sokhasyvak1994}.

We will use the following notation: $\mathbb{Z}_{m}$ denotes the ring of integers modulo $m$; $\mathbb{Z}_{m}^{*}$ the group of invertible elements of the ring $\mathbb{Z}_{m}$; and $(\alpha,\beta,d)$, where $\alpha,\beta\in\mathbb{Z}_{m}^{*}$ and $d\in\mathbb{Z}_{m}$, denotes an operation $(\circ)$ which is defined on $\mathbb{Z}_{m}$ by the equality
\begin{equation}\label{liniso}
x\circ y=\alpha\cdot x+\beta\cdot y+d.
\end{equation}
Since every automorphism $\theta$ of the cyclic group $(\mathbb{Z}_{m};+)$ can be defined by $\theta(x)=k\cdot x$ for some $k\in\mathbb{Z}_{m}^{*}$, then linear isotopes of $(\mathbb{Z}_{m};+)$ are exactly the operations being defined by $(\ref{liniso})$, i.e., they are the triples $(\alpha,\beta,d)$.

\begin{theorem}$\cite{sokhasyvak1994}$\label{tliniso2}
An arbitrary linear isotope of a cyclic $m$ order group is isomorphic to exactly one isotope $(\mathbb{Z}_{m},\circ)$ defined by (\ref{liniso}), where $\alpha,\beta$  is a pair of invertible elements in the ring $\mathbb{Z}_{m}$ and $d$ is a common divisor of $\mu=\alpha+\beta-1$ and $m$.
\end{theorem}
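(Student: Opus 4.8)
The plan is to reduce the classification to understanding when two operations of the form $(\ref{liniso})$ are isomorphic, and then to select a canonical representative in each isomorphism class. Since every cyclic group of order $m$ is isomorphic to $(\mathbb{Z}_m;+)$ and its automorphisms are exactly the multiplications by units, the discussion preceding the theorem already shows that each linear isotope is isomorphic to some triple $(\alpha,\beta,d)$ with $\alpha,\beta\in\mathbb{Z}_m^{*}$ and $d\in\mathbb{Z}_m$; so it remains to treat isomorphisms among these triples. I would first prove that every isomorphism $f$ between two such isotopes is affine, i.e. $f(x)=kx+t$ with $k\in\mathbb{Z}_m^{*}$, $t\in\mathbb{Z}_m$. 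To see this, write the isomorphism condition $f(\alpha x+\beta y+d)=\alpha'f(x)+\beta'f(y)+d'$, substitute $u=\alpha x$, $v=\beta y$, and set $F(w):=f(w+d)$; eliminating the two one-variable terms turns the relation into the Cauchy equation $F(u+v)=F(u)+F(v)-F(0)$. Since the only endomorphisms of the cyclic group $(\mathbb{Z}_m;+)$ are multiplications by elements of $\mathbb{Z}_m$ and $f$ is a bijection, the map $w\mapsto F(w)-F(0)$ must be multiplication by a unit, which yields the affine form.

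Once $f$ is known to be affine, comparing the coefficients of $x$, of $y$, and the constant term in the isomorphism identity would give $\alpha'=\alpha$, $\beta'=\beta$ and $d'=kd-\mu t$, where $\mu=\alpha+\beta-1$. Thus $\alpha$ and $\beta$ are isomorphism invariants, and two triples $(\alpha,\beta,d)$ and $(\alpha,\beta,d')$ are isomorphic if and only if $d'$ lies in the orbit of $d$ under all maps $d\mapsto kd-\mu t$ with $k\in\mathbb{Z}_m^{*}$, $t\in\mathbb{Z}_m$. The next step is to describe these orbits. As $t$ ranges over $\mathbb{Z}_m$, the values $\mu t$ fill the subgroup $\mu\mathbb{Z}_m=g\mathbb{Z}_m$, where $g=\gcd(\mu,m)$, so the orbit of $d$ reduced modulo $g$ is $\{k\bar d:k\in\mathbb{Z}_m^{*}\}$. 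Using that reduction modulo a divisor $g\mid m$ maps $\mathbb{Z}_m^{*}$ onto $\mathbb{Z}_g^{*}$, this set equals $\{c\in\mathbb{Z}_g:\gcd(c,g)=\gcd(d,g)\}$. Hence the orbit is completely determined by the single invariant $e:=\gcd(d,\mu,m)=\gcd(d,g)$.

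It then remains to check that the common divisors $d$ of $\mu$ and $m$ form an exact system of representatives. Each such $d$ divides $g=\gcd(\mu,m)$, so $\gcd(d,\mu,m)=d$; therefore distinct common divisors carry distinct invariants and are pairwise non-isomorphic, while any triple $(\alpha,\beta,d_0)$ is isomorphic to $(\alpha,\beta,e)$ with $e=\gcd(d_0,\mu,m)$, a common divisor of $\mu$ and $m$ lying in the orbit of $d_0$ by the computation above. This establishes both existence and uniqueness of the representative.

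The main obstacle I anticipate is the affineness step: a quasigroup isomorphism is a priori only a bijection, so one must genuinely extract the additive structure from it, and it is the Cauchy-equation reduction together with the rigidity of endomorphisms of a cyclic group that makes this work. The subsequent orbit analysis is elementary number theory, the only delicate point being the surjectivity of $\mathbb{Z}_m^{*}\to\mathbb{Z}_g^{*}$ and the resulting identification of the orbit of $d$ modulo $g$ with the elements of $\mathbb{Z}_g$ sharing its greatest common divisor with $g$.
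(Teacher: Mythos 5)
The paper does not prove this theorem: it is imported verbatim from Sokhatsky--Syvakivskyj \cite{sokhasyvak1994}, so there is no in-paper argument to compare against. Judged on its own, your proof is correct and complete in outline. The Cauchy-equation reduction does work: setting $F(w)=f(w+d)$ and separating the two one-variable terms yields $F(u+v)=F(u)+F(v)-F(0)$, and since every endomorphism of $(\mathbb{Z}_m;+)$ is multiplication by an element, bijectivity forces $f(x)=kx+t$ with $k\in\mathbb{Z}_m^{*}$. The coefficient comparison then correctly gives $\alpha'=\alpha$, $\beta'=\beta$ and $d'=kd-\mu t$, and your orbit analysis (using that $\mu\mathbb{Z}_m=g\mathbb{Z}_m$ with $g=\gcd(\mu,m)$ and that reduction $\mathbb{Z}_m^{*}\to\mathbb{Z}_g^{*}$ is onto) correctly identifies $\gcd(d,\mu,m)$ as a complete invariant, with the positive common divisors of $\mu$ and $m$ as a transversal. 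This is consistent with the paper's Corollaries~\ref{cliniso1} and~\ref{cliniso2}: for prime $p$ you get one class per pair when $\mu\neq 0$ and two when $\mu=0$, totalling $p^{2}-p-1$. Two small points worth making explicit if you write this up: the surjectivity of $\mathbb{Z}_m^{*}\to\mathbb{Z}_g^{*}$ needs a one-line CRT argument, and when $g=m$ (i.e., $\alpha+\beta=1$) the common divisor $m$ must be read as the element $0\in\mathbb{Z}_m$, which is exactly the representative $(\alpha,\beta,0)$ appearing in Corollary~\ref{cliniso1}.
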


Classification of linear group isotopes of $\mathbb{Z}_{m}$ according to their symmetry groups is given in the following corollary.

\begin{corollary}\label{cisp}
Let $\mathbb{Z}_{m}$ be a ring of integers modulo $m$ and let $(\alpha,\beta,d)$ be its arbitrary linear isotope, where $d\in GCD(m;\alpha+\beta-1)$. Then an arbitrary linear isotope of an $m$-order cyclic group is isomorphic to exactly one isotope $(\mathbb{Z}_{m},\circ)$ defined by (\ref{liniso}), besides

\begin{center}
\begin{tabular}{|c|c|c|l|}\hline
\myd{a group isotope $(Q;\circ)$} &  \mad{ its symmetry group}  &\mep{\smallskip conditions of its canonical decomposition (\ref{cdec})\smallskip}   \\ \hline
\myd{is strictly commutative}& $\{1,s\}$ & $\beta=\alpha\neq -1$\\ \hline
\myd{is strictly left symmetric} & $\{1,r\}$ & $\beta=-1\neq\alpha$\\ \hline
\myd{is strictly right symmetric}&  $\{1,\ell\}$ & $\alpha=-1\neq\beta$  \\ \hline
\myd{is strictly semi-symmetric} & $A_{3}$ & \mep{\smallskip$\alpha\neq -1$, $\beta=\alpha^{-1}$, \\$\alpha^{3}=-1$, $\alpha d=-d$\smallskip} \\ \hline
\myd{is totally symmetric} & $S_{3}$  & \mep{\smallskip$\alpha=\beta=-1$\smallskip} \\ \hline
\myd{is asymmetric}&  $\{1\}$ & \mep{\smallskip$-1\neq\alpha\neq \beta\neq -1$ and $\beta\neq\alpha^{-1}$ or $\alpha^{3}\neq -1$ or $\alpha{d}\neq -d$\smallskip}  \\ \hline
\end{tabular}
\end{center}
\end{corollary}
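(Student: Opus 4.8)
The plan is to combine the up-to-isomorphism normal form of Theorem~\ref{tliniso2} with the symmetry criteria already obtained for linear isotopes of commutative groups in Corollary~\ref{ctqis}, and then to rewrite the abstract automorphism conditions as arithmetic in $\mathbb{Z}_{m}$. First I would observe that the operation (\ref{liniso}) is itself a $0$-canonical decomposition in the sense of (\ref{cdec}): its decomposition group is the abelian group $(\mathbb{Z}_{m};+)$ with neutral element $0$, its free member is $a=d$ (moving $d$ into the middle by commutativity), and its coefficients are the maps $x\mapsto\alpha\cdot x$ and $y\mapsto\beta\cdot y$. Because every automorphism of the cyclic group $(\mathbb{Z}_{m};+)$ is multiplication by a unit and $\alpha,\beta\in\mathbb{Z}_{m}^{*}$, these coefficients are automorphisms (in particular bijections, so $(\mathbb{Z}_{m};\circ)$ is a quasigroup), and $(\mathbb{Z}_{m};\circ)$ is a linear isotope of a commutative group, i.e.\ a $T$-quasigroup. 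Hence every item of Corollary~\ref{ctqis} applies to it verbatim.

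Next I would specialize each criterion. Since the decomposition group is abelian, the inner map $I_{a}(x)=-a+x+a$ is the identity $\iota$, so $-I_{a}^{-1}=-\iota$ and the semi-symmetry condition $\alpha^{3}=-I_{a}^{-1}$ becomes the ring equality $\alpha^{3}=-1$; likewise $\alpha a=-a$ becomes $\alpha d=-d$, and the requirement that $\alpha$ be an anti-automorphism is automatic because in an abelian group every automorphism is an anti-automorphism. Writing each coefficient as multiplication by a unit, the equalities $\beta=\alpha$, $\beta=-\iota$, $\alpha=-\iota$ and $\beta=\alpha^{-1}$ turn into $\beta=\alpha$, $\beta=-1$, $\alpha=-1$ and $\beta=\alpha^{-1}$ in $\mathbb{Z}_{m}^{*}$; similarly the chain $-\iota\neq\alpha\neq\beta\neq-\iota$ becomes $-1\neq\alpha\neq\beta\neq-1$. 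This reproduces exactly the conditions recorded in the table for the commutative, left symmetric, right symmetric, totally symmetric, semi-symmetric and asymmetric cases.

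Finally I would account for the word \emph{strictly} and for well-definedness. The totally symmetric isotopes are precisely those with $\alpha=\beta=-1$, and they are the only members of two symmetric classes at once; deleting them from the commutative, left symmetric, right symmetric and semi-symmetric classes produces the side conditions $\alpha\neq-1$ (equivalently $\beta\neq-1$) listed in the table. Because $\mathrm{Sym}(\circ)$ is invariant under isomorphism, classifying the unique normal-form representative furnished by Theorem~\ref{tliniso2} classifies the original isotope as well.

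The step that most deserves care is the semi-symmetric row: one has to confirm that a normalized triple with $\beta=\alpha^{-1}$, $\alpha^{3}=-1$ and $\alpha d=-d$ is consistent with the constraint $d\in GCD(m;\alpha+\beta-1)$ imposed by Theorem~\ref{tliniso2}, and that adjoining $\alpha\neq-1$ genuinely separates the strictly semi-symmetric isotopes from the totally symmetric ones. The remaining arguments are routine specializations of the general criteria.
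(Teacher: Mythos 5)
Your proposal is correct and follows exactly the route the paper intends: the paper states Corollary~\ref{cisp} without a separate proof, treating it as the immediate specialization of Theorem~\ref{tcgi} (via Corollary~\ref{ctqis}) to the normal forms of Theorem~\ref{tliniso2}, where $I_{a}=\iota$, anti-automorphisms coincide with automorphisms, and every coefficient is multiplication by a unit of $\mathbb{Z}_{m}$. Your additional remarks on the \emph{strictly} qualifier and on isomorphism-invariance of $\mathrm{Sym}(\circ)$ match Table~1 and the surrounding discussion in the paper.
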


\subsection{Linear group isotopes of prime orders}

Group isotopes and linear isotopes were studied by many authors: V.~Belousov~\cite{bil1966}, E.~Falconer~\cite{falc1971},  T.~Kepka and P.~Nemec~\cite{kepnem1971I},~\cite{kepnem1971II} , V.~Shcherbacov~\cite{shcherb1991}, F.~Sokhatsky~\cite{sokha1999}, A.~Dr\'{a}pal~\cite{drapal2009}, G.~Belyavskaya~\cite{belyav2013} and others.

In this  part of the article, we are giving a full classification of linear group isotopes of prime order up to isomorphism relation and according to their symmetry groups.

Theorem~\ref{tliniso2} implies the following statements.

\begin{corollary}\label{cliniso1}$\cite{sokhasyvak1994}$
 Linear group isotopes of a prime order, which are defined by a pair $(\alpha,\beta)$ are pairwise isomorphic, if $\alpha+\beta\ne1$. If $\alpha+\beta=1$, then they are isomorphic to either $(\alpha,\beta,0)$ or $(\alpha,\beta,1)$.
\end{corollary}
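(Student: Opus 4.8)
The plan is to read the statement off from the uniqueness result Theorem~\ref{tliniso2}, specialised to $m=p$ prime. That theorem tells us that a linear isotope of $(\mathbb{Z}_p;+)$ with coefficient pair $(\alpha,\beta)$ is isomorphic to exactly one canonical triple $(\alpha,\beta,d)$, in which $d$ runs over the common divisors of $\mu:=\alpha+\beta-1$ and $p$. So for a fixed pair $(\alpha,\beta)$ the number of isomorphism classes among the isotopes $(\alpha,\beta,d)$, $d\in\mathbb{Z}_p$, equals the number of admissible values of $d$, and the whole corollary reduces to counting those values.

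First I would list the positive divisors of the prime $p$: they are only $1$ and $p$. The value $d=1$ is always a common divisor of $\mu$ and $p$. The value $d=p$ is a common divisor exactly when $p\mid\mu$, i.e.\ when $\mu\equiv0\pmod p$, which is precisely the condition $\alpha+\beta=1$ in $\mathbb{Z}_p$. Reducing modulo $p$, the divisor $p$ represents the residue $0$. Hence the admissible residues for $d$ are $\{1\}$ when $\alpha+\beta\neq1$ and $\{0,1\}$ when $\alpha+\beta=1$.

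The two cases of the corollary now follow at once. If $\alpha+\beta\neq1$ there is a single canonical triple $(\alpha,\beta,1)$, so every isotope with this coefficient pair is isomorphic to it, and therefore they are pairwise isomorphic. If $\alpha+\beta=1$ there are exactly two canonical triples, $(\alpha,\beta,0)$ and $(\alpha,\beta,1)$, so every such isotope is isomorphic to one of these two. As an independent check of the mechanism behind Theorem~\ref{tliniso2}, one may note that an affine map $\theta(x)=kx+t$ with $k\in\mathbb{Z}_p^{*}$ sends $(\alpha,\beta,d)$ to $(\alpha,\beta,kd-\mu t)$; when $\mu\neq0$ the term $-\mu t$ sweeps all of $\mathbb{Z}_p$, giving a single orbit, while when $\mu=0$ the action $d\mapsto kd$ has exactly the two orbits $\{0\}$ and $\mathbb{Z}_p^{*}$.

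I expect the only delicate point to be the bookkeeping attached to the word \emph{divisor} when passing from integers to residues: one must keep track of the fact that the integer divisor $p$ collapses to the residue $0$, that $d=1$ is always available, and that $0$ is never an integer divisor of $p$ unless it arises as $p$ itself, i.e.\ unless $\alpha+\beta=1$. Everything else — in particular that the coefficient pair $(\alpha,\beta)$ is an isomorphism invariant and that distinct canonical triples are non-isomorphic — is already packaged inside Theorem~\ref{tliniso2}, so no further computation is needed.
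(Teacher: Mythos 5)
Your proposal is correct and follows exactly the route the paper intends: the paper gives no separate argument for this corollary beyond the remark that ``Theorem~\ref{tliniso2} implies the following statements,'' and your specialisation of that theorem to $m=p$ (the common divisors of $\mu=\alpha+\beta-1$ and $p$ being $\{1\}$ when $p\nmid\mu$ and $\{1,p\}\equiv\{1,0\}$ when $p\mid\mu$), together with the affine-map sanity check $d\mapsto kd-\mu t$, is precisely the intended reasoning. The only cosmetic mismatch is that in the one-class case you name $(\alpha,\beta,1)$ as the canonical representative while the paper later uses $(\alpha,\beta,0)$ in its set $M_{0}$; since all free terms give isomorphic isotopes when $\alpha+\beta\neq1$, this does not affect the statement.
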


\begin{corollary}\label{cliniso2}$\cite{sokhasyvak1994,shchukin2004}$
 There exist exactly $p^{2}-p-1$ linear group isotopes of a prime order $p$ up to isomorphism.
\end{corollary}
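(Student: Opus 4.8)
The plan is to reduce the statement to a counting problem built directly on Corollary~\ref{cliniso1}, which already describes the isomorphism classes of linear group isotopes of prime order purely in terms of the pair $(\alpha,\beta)$ of invertible coefficients. Since $p$ is prime, the ring $\mathbb{Z}_p$ is a field, so $\mathbb{Z}_p^{*}=\{1,2,\dots,p-1\}$ has exactly $p-1$ elements, and the number of admissible pairs $(\alpha,\beta)$ with $\alpha,\beta\in\mathbb{Z}_p^{*}$ equals $(p-1)^{2}$. Each such pair is the starting datum of Corollary~\ref{cliniso1}, so the whole argument amounts to tallying how many isomorphism classes each pair contributes.

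First I would split the pairs into the two types dictated by the dichotomy of Corollary~\ref{cliniso1}. A pair with $\alpha+\beta\ne1$ determines a single isomorphism class, whereas a pair with $\alpha+\beta=1$ splits into exactly two classes, represented by $(\alpha,\beta,0)$ and $(\alpha,\beta,1)$. Consequently the total number of isomorphism classes equals the number of admissible pairs, namely $(p-1)^{2}$, augmented by one additional class for each pair satisfying $\alpha+\beta=1$.

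Next I would count the pairs with $\alpha+\beta=1$. For a fixed $\alpha\in\mathbb{Z}_p^{*}$ the second coefficient is forced to be $\beta=1-\alpha$, and the sole extra constraint is $\beta\ne0$, that is $\alpha\ne1$; since $\alpha\ne0$ is already required, exactly $p-2$ values of $\alpha$ survive. Hence there are $p-2$ such pairs. Combining the two counts yields $(p-1)^{2}+(p-2)=p^{2}-2p+1+p-2=p^{2}-p-1$, which is precisely the asserted number.

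I do not anticipate any serious obstacle once Corollary~\ref{cliniso1} is available; the only point requiring genuine care is the boundary case $\alpha+\beta=1$. There one must check that both coefficients remain invertible (equivalently nonzero, as $\mathbb{Z}_p$ is a field) so that the pair is admissible at all, and that the two resulting classes $(\alpha,\beta,0)$ and $(\alpha,\beta,1)$ are genuinely distinct and do not overlap with the classes already counted for pairs with $\alpha+\beta\ne1$. These non-overlap facts are exactly what Corollary~\ref{cliniso1} guarantees, so the bookkeeping closes cleanly.
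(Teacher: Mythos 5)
Your count is correct and follows essentially the route the paper intends: the paper states this corollary without its own proof (attributing it to the cited sources) as an immediate consequence of Theorem~\ref{tliniso2} and Corollary~\ref{cliniso1}, and your tally $(p-1)^{2}+(p-2)=p^{2}-p-1$ is exactly the computation that fills in that implication. The only minor attribution slip is that the non-isomorphy of isotopes belonging to \emph{distinct} coefficient pairs is guaranteed by the uniqueness clause of Theorem~\ref{tliniso2} rather than by Corollary~\ref{cliniso1} itself, but that fact is available in the paper, so nothing is missing.
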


Let $p$ be prime, then $\mathbb{Z}_{p}$ is a field, so, according to Corollary~\ref{cliniso1}, there are two kinds of group isotopes of the cyclic group $\mathbb{Z}_{p}$:
 \begin{itemize}
   \item $M_{0}:=\{(\alpha,\beta,0)\mid \alpha,\beta=1,2,\dots,p-1\}$;
   \item $M_{1}:=\{(\alpha,1-\alpha,1)\mid \alpha=2,\dots,p-1\}$.
 \end{itemize}
 For brevity, the symbols $cs$, $ls$, $rs$, $ts$, $ss$, $as$ denote respectively strictly commutative, strictly left symmetric, strictly right symmetric, strictly semi-symmetric, totally symmetric, asymmetric quasigroups. For example, $M_{0}^{ss}$ denotes the set of all strictly semi-symmetric group isotopes from $M_{0}$.

\paragraph{Quasigroups of orders 2 and 3.} Every quasigroup of the orders 2 and 3 is isotopic to the cyclic groups. Only $\iota$ is a unitary substitutions of $\mathbb{Z}_{2}$, so Theorem~\ref{tcdec} implies that there exist two group isotopes:
$$
x\underset{0}{\circ}y:=x+y,\qquad\hbox{and}\qquad x\underset{1}{\circ}y:=x+y+1.
$$
They are isomorphic and $\varphi(x):=x+1$ is the corresponding isomorphism.

\begin{proposition}\label{pisp1}
All quasigroups of the order $2$ are pairwise isomorphic.
\end{proposition}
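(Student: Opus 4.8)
The plan is to reduce the statement to the explicit list of order-$2$ group isotopes already exhibited just above the proposition, and then to produce a direct isomorphism between them. First I would recall, as noted above, that every quasigroup of order $2$ is isotopic to the cyclic group $(\mathbb{Z}_{2};+)$, and hence is a group isotope of it. By Theorem~\ref{tcdec}, after fixing the neutral element $0$ as the defining element, such a quasigroup admits a canonical decomposition $x\cdot y=\alpha x+a+\beta y$ in which $\alpha,\beta$ are unitary permutations of $(\mathbb{Z}_{2};+)$.

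Second, I would observe that the only unitary permutation of $\mathbb{Z}_{2}$ is $\iota$: a unitary permutation fixes $0$ by definition, and since the underlying set $\{0,1\}$ has only two elements it must fix $1$ as well. Hence $\alpha=\beta=\iota$, and the decomposition collapses to $x\cdot y=x+a+y$ with $a\in\{0,1\}$. This yields precisely the two operations $\underset{0}{\circ}$ and $\underset{1}{\circ}$ displayed above, so up to the choice of the free member there are no other order-$2$ group isotopes on $\mathbb{Z}_{2}$.

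Third, I would verify that these two isotopes are isomorphic via $\varphi(x):=x+1$, as claimed above: working in $\mathbb{Z}_{2}$ one checks $\varphi(x\underset{0}{\circ}y)=x+y+1$ and $\varphi(x)\underset{1}{\circ}\varphi(y)=(x+1)+(y+1)+1=x+y+1$, so $\varphi$ transports $\underset{0}{\circ}$ onto $\underset{1}{\circ}$. Since an arbitrary quasigroup of order $2$ is isomorphic, by relabelling its underlying set onto $\{0,1\}$, to one of $\underset{0}{\circ}$, $\underset{1}{\circ}$, and these two are in turn mutually isomorphic, all quasigroups of order $2$ lie in a single isomorphism class, which is exactly the assertion.

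As for difficulty, there is essentially no obstacle here, since both the enumeration of the two isotopes and the isomorphism $\varphi$ are already in hand; the only point needing a word of care is the passage from ``isotopic to a group'' to the explicit two-element list, and this is supplied by the uniqueness of the canonical decomposition (Theorem~\ref{tcdec}) together with the triviality of the group of unitary permutations of $\mathbb{Z}_{2}$.
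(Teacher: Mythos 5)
Your argument is exactly the paper's: the paper proves the proposition in the paragraph preceding it by noting that every order-$2$ quasigroup is isotopic to the cyclic group, that $\iota$ is the only unitary permutation of $\mathbb{Z}_{2}$, so Theorem~\ref{tcdec} leaves only the two isotopes $\underset{0}{\circ}$ and $\underset{1}{\circ}$, which are isomorphic via $\varphi(x)=x+1$. Your version is correct and merely spells out the verification that $\varphi$ is an isomorphism, which the paper asserts without computation.
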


There exist two unitary substitutions of the group $\mathbb{Z}_{3}$: $\iota$ and $(12)$ and the both of them are automorphisms of the cyclic group $\mathbb{Z}_{3}$. Thus, Theorem~\ref{tcdec} implies that all quasigroups of the order $3$ are linear isotopes of the cyclic group. So,
Corollary~\ref{cliniso1} implies that
$$
M_{0}=\{(1,1,0),(2,2,0),(1,2,0),(2,1,0)\},\qquad M_{1}=\{(2,2,1)\}.
$$
According to Corollary~\ref{cliniso2} and Corollary~\ref{cisp}, we obtain the following result.

\begin{proposition}\label{cisp1}
There exist exactly five 3-order quasigroups up to isomorphism, which can be distributed into four blocks:
\begin{enumerate}
 \item strictly commutative: $(1,1,0)$;
 \item strictly left symmetric: $(1,2,0)$;
\item strictly right symmetric: $(2,1,0)$;
 \item totally symmetric:  $(2,2,0)$, $(2,2,1)$.
\end{enumerate}
\end{proposition}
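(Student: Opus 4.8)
The plan is to treat this as a finite verification resting on the three results just established. First I would recall that the only unitary substitutions of $\mathbb{Z}_3$ are $\iota$ and $(12)$, and that both are automorphisms of the cyclic group; hence by Theorem~\ref{tcdec} every quasigroup of order $3$ is a \emph{linear} isotope of $\mathbb{Z}_3$, so the whole problem lives inside the family of triples $(\alpha,\beta,d)$. Corollary~\ref{cliniso2} then pins down the count: there are exactly $p^2-p-1=5$ such isotopes up to isomorphism, and Corollary~\ref{cliniso1} supplies explicit representatives, namely the four members of $M_0$ together with the single member $(2,2,1)$ of $M_1$. Thus the enumeration is already complete; what remains is to sort these five triples into their symmetry blocks.

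The second step is a direct application of Corollary~\ref{cisp} with $m=3$, where $-1=2$ and $\mathbb{Z}_3^{*}=\{1,2\}$. For each triple I would simply check the defining equalities and inequalities: $(1,1,0)$ satisfies $\beta=\alpha\neq -1$, hence is strictly commutative; $(1,2,0)$ satisfies $\beta=-1\neq\alpha$, hence strictly left symmetric; $(2,1,0)$ satisfies $\alpha=-1\neq\beta$, hence strictly right symmetric; and both $(2,2,0)$ and $(2,2,1)$ satisfy $\alpha=\beta=-1$, hence totally symmetric. This reproduces the four-block distribution claimed in the statement.

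Finally I would explain why no strictly semi-symmetric or asymmetric quasigroup occurs, which is the only place that calls for a word of argument rather than mere substitution. Semi-symmetry would require $\alpha^{3}=-1$ together with $\alpha\neq -1$, but in $\mathbb{Z}_3^{*}$ the only solution of $\alpha^{3}=2$ is $\alpha=2=-1$, so these two conditions are incompatible. Asymmetry would require $-1\neq\alpha\neq\beta\neq -1$, which forces $\alpha=\beta=1$ inside the two-element set $\{1,2\}$ and contradicts $\alpha\neq\beta$. There is no genuine obstacle here; the one delicate bookkeeping point is that $(2,2,0)$ and $(2,2,1)$ both fall in the totally symmetric block yet must be counted as two \emph{distinct} quasigroups. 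This is precisely the case $\alpha+\beta=1$ of Corollary~\ref{cliniso1} (note $2+2\equiv 1$), which guarantees that these two triples are non-isomorphic, so the four blocks together do contain exactly five quasigroups and the count matches Corollary~\ref{cliniso2}.
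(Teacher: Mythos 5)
Your proposal is correct and follows essentially the same route as the paper: the paper's own justification is the paragraph preceding the proposition, which likewise reduces every order-$3$ quasigroup to a linear isotope of $\mathbb{Z}_3$ via the two unitary substitutions and Theorem~\ref{tcdec}, lists the five representatives from Corollary~\ref{cliniso1}, and sorts them by Corollary~\ref{cisp}. Your explicit checks that the semi-symmetric and asymmetric blocks are empty, and that $(2,2,0)$ and $(2,2,1)$ are non-isomorphic because $2+2\equiv 1\pmod 3$, only make explicit what the paper leaves to the reader.
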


\paragraph{Linear group isotopes of the order $p>3$.} Full description of these group isotopes is given in the following theorem.

\begin{theorem}\label{tispm} The set of all pairwise non-isomorphic group isotopes of prime order $p\,(p>3)$ is equal to
$$
M=\{(\alpha,\beta,0)\mid \alpha,\beta=1,2,\dots,p\!-\!1\}\cup\{(\alpha,1\!-\!\alpha,1)\mid \alpha=2,\dots,p\!-\!1\}.
$$
The set $M$ equals union of the following disjoint sets:
\begin{enumerate}
 \item the set of all strictly commutative group isotopes
 $$
 M^{cs}=\{(1,1,0),\,(2,2,0),\ldots,(p-2,p-2,0),\,(2^{-1},2^{-1},1)\};
 $$
 \item the set of all strictly left symmetric group isotopes
 $$
 M^{ls}=\{(1,p-1,0),\,(2,p-1,0),\ldots,(p-2,p-1,0),\,(2,p-1,1)\};
 $$
\item the set of all strictly right symmetric group isotopes
$$
M^{rs}=\{(p-1,1,0),\,(p-1,2,0),\ldots,(p-1,p-2,0),\,(p-1,2,1)\};
$$
 \item the set of all totally symmetric group isotopes
 $$
 M^{ts}=\{(p-1,p-1,0)\};
 $$
 \item \label{nn1} the set $M^{ss}$ of all strictly semi-symmetric group isotopes is empty, if $p-3$ is not quadratic residue modulo $p$, but if there exists $k$ such that $p-3=k^{2}$ modulo $p$, then the set is equal to
 $$
 M^{ss}=\left\{\left((1+k)2^{-1},2(1+k)^{-1},0\right),\left((1-k)2^{-1},2(1-k)^{-1},0\right)\right\};
 $$
 \item \label{nn2} the set of all asymmetric group isotopes is equal to
 $$
  M^{as}=\left\{\{(3,p\!-\!2,1),\ldots,(p\!-\!2,3\!-\!p,1)\}\setminus(2^{-1},1\!-\!2^{-1},1)\right\}\!\cup
$$
$$
\cup\left\{(\alpha,\beta,0)\;\big|\; \alpha,\beta=1,2,3,\dots,p-2,\;\alpha\neq\beta\right\}\setminus{M^{ss}}.
$$
\end{enumerate}
\end{theorem}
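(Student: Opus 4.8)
The plan is to run both families that make up $M$ through the membership criteria of Corollary~\ref{cisp}, treating the parameter triples one class at a time. By Corollary~\ref{cliniso1} the sets $M_{0}=\{(\alpha,\beta,0)\}$ and $M_{1}=\{(\alpha,1-\alpha,1)\}$ already form a complete and irredundant list of representatives, so it suffices to decide, for each triple, which of the six symmetry conditions it satisfies. Throughout I would use that $\mathbb{Z}_{p}$ is a field with $-1=p-1$, so that $\alpha^{-1}$ always exists and that commutativity of $(\mathbb{Z}_{p};+)$ makes $I_{a}=\iota$, whence the semi-symmetry exponent condition reads simply $\alpha^{3}=-1$.

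First I would dispose of the four ``automorphism-type'' classes, whose criteria involve only $\alpha$ and $\beta$. On $M_{0}$: commutativity $\alpha=\beta\neq -1$ carves out the diagonal minus $(p-1,p-1,0)$; left symmetry $\beta=-1\neq\alpha$ and right symmetry $\alpha=-1\neq\beta$ give the two border rows; and $\alpha=\beta=-1$ isolates the single totally symmetric isotope $(p-1,p-1,0)$. On $M_{1}$, substituting $\beta=1-\alpha$ turns each condition into one linear equation in $\alpha$: $\alpha=\beta$ forces $\alpha=2^{-1}$, giving $(2^{-1},2^{-1},1)$; $\beta=-1$ forces $\alpha=2$, giving $(2,p-1,1)$; $\alpha=-1$ gives $(p-1,2,1)$; and $\alpha=\beta=-1$ is impossible for $p>3$. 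These account for every entry of $M^{cs}$, $M^{ls}$, $M^{rs}$, $M^{ts}$.

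The crux is the semi-symmetric class. Here the free-member condition $\alpha d=-d$, i.e. $(\alpha+1)d=0$, is decisive: since a field has no zero divisors and semi-symmetry requires $\alpha\neq -1$, it forces $d=0$, so no member of $M_{1}$ can be semi-symmetric and all of $M^{ss}$ lies in $M_{0}$. On $M_{0}$ the remaining conditions are $\beta=\alpha^{-1}$ together with $\alpha^{3}=-1$, $\alpha\neq -1$; factoring $\alpha^{3}+1=(\alpha+1)(\alpha^{2}-\alpha+1)$ reduces these to the quadratic $\alpha^{2}-\alpha+1=0$, whose discriminant is $-3\equiv p-3$. Thus $M^{ss}$ is empty exactly when $p-3$ is not a quadratic residue, and otherwise, writing $p-3=k^{2}$, the two roots $\alpha=(1\pm k)2^{-1}$ (distinct because $k\neq 0$ for $p>3$, and both $\neq -1$ since $3\neq 0$) together with $\beta=\alpha^{-1}=2(1\pm k)^{-1}$ yield precisely the displayed pair.

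Finally, the asymmetric set is obtained by elimination: a triple lies in $M^{as}$ iff $-1\neq\alpha\neq\beta\neq -1$ and it is not semi-symmetric. Intersecting with $M_{0}$ gives all $(\alpha,\beta,0)$ with $\alpha,\beta\in\{1,\dots,p-2\}$, $\alpha\neq\beta$, minus $M^{ss}$; intersecting with $M_{1}$ removes the three exceptional values $\alpha\in\{2,\,2^{-1},\,p-1\}$ (the left-symmetric, commutative and right-symmetric cases) from the range $\alpha\in\{2,\dots,p-1\}$, leaving the listed chain of $d=1$ triples from $(3,p-2,1)$ to $(p-2,3-p,1)$ punctured at $2^{-1}$. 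The only step needing genuine care is the semi-symmetric one — the passage from $\alpha^{3}=-1$, $\alpha\neq -1$ to the quadratic-residue condition on $p-3$ — while everything else is bookkeeping; I would close by checking that the six sets are pairwise disjoint and exhaust $M$, equivalently that their cardinalities sum to the count $p^{2}-p-1$ of Corollary~\ref{cliniso2}.
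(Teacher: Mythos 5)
Your proposal is correct and follows essentially the same route as the paper: both treat the classes one at a time by feeding the representatives $M_{0}\cup M_{1}$ from Corollary~\ref{cliniso1} into the criteria of Corollary~\ref{cisp}, both derive $d=0$ for the semi-symmetric case from $(\alpha+1)d=0$ with $\alpha\neq-1$ and reduce $\alpha^{3}=-1$ to the quadratic $\alpha^{2}-\alpha+1=0$ with discriminant $-3\equiv p-3$, and both obtain $M^{as}$ by elimination (your exceptional values $\alpha\in\{2,\,2^{-1},\,p-1\}$ for $d=1$ match the paper's exclusion of $\alpha\in\{0,1,2,\frac{p+1}{2},p-1\}$ since $2^{-1}=\frac{p+1}{2}$). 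The concluding cardinality check you propose is exactly what the paper records in Corollary~\ref{cispk}.
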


\begin{proof} Let $(\alpha,\beta,d)$ be an arbitrary group isotope.

If it is commutative, then, according to Corollary~\ref{cisp}, we obtain
$$
M_{0}^{cs}=\{(\alpha,\alpha,0)\mid\alpha=1,2,\ldots,p-2\}, \quad |M_{0}^{cs}|=p-2.
$$
If $d=1$, then Corollary~\ref{cliniso1} implies $2\alpha=1$, i.e., $M_{1}^{cs}=\{(2^{-1},2^{-1},1)\}$. Thus, the set of all pairwise non-isomorphic commutative group isotopes is $M^{cs}=M_{0}^{cs}\cup M_{1}^{cs}$ and $|M^{cs}|=p-1$.

Consider left symmetric quasigroups. According to Corollary~\ref{cisp}, we have
$$
M_{0}^{ls}=\{(\alpha,p-1,0)\mid\alpha=1,2,\ldots,p-2\},\quad |M_{0}^{ls}|=p-2.
$$
If $d=1$, then Corollary~\ref{cliniso1} implies $\alpha=2$ and, by Corollary~\ref{cisp}, $M_{1}^{ls}=\{(2,-1,1)\}$. Thus, the set of all pairwise non-isomorphic left symmetric group isotopes is $M^{ls}=M_{0}^{ls}\cup M_{1}^{ls}$ and $|M^{ls}|=p-1$.

The relationships for right symmetric quasigroups can be proved in the same way.

In virtue of Corollary~\ref{cisp}, an arbitrary totally symmetric isotope is defined by the pair $(-1,-1)=(p-1,p-1)$ of automorphisms. According to Corollary~\ref{cliniso1}, $d=0,1$. Suppose that $d=1$, then $p-1+p-1=1$, i.e., $2p=3$. Since $p>3$, than this equality is impossible, so, $d=0$. Thus, $M^{ts}=\{(p-1,p-1,0)\}$ and $|M^{ts}|=1$.

Consider semi-symmetric quasigroups.  According to Corollary~\ref{cliniso1},
$$
\alpha\ne-1,\qquad \alpha^{3}=-1,\qquad \alpha d=-d,
$$
where $d=0,1$. But $d\ne1$, since $\alpha\ne-1$, so $d=0$. The equality $\alpha^{3}=-1$ is equivalent to $(\alpha+1)(\alpha^2-\alpha+1)=0$. It is equivalent to $\alpha^2-\alpha+1=0$. It is easy to prove that $\alpha$ exists if and only if $p-3$ is a quadratic residue modulo $p$. If $p-3=k^{2}$ modulo $p$, then
$$
M^{ss}=\left\{\left((1+k)2^{-1},2(1+k)^{-1},0\right),\,\left((1-k)2^{-1},2(1-k)^{-1},0\right)\right\}.
$$
Consequently, $|M^{ss}|=2$, and $M^{ss}=\varnothing$ otherwise.

Let $(\alpha,\beta,d)$ be an arbitrary asymmetric group isotope. Corollary~\ref{cliniso1} implies that $d\in\{0,1\}$. Since the given isotope is neither commutative, nor left symmetric, nor right symmetric, nor totally symmetric, then, according to Corollary~\ref{cisp}, $\alpha,\beta\not\in\{0,p-1\}$ and $\alpha\neq\beta$.

In the case when $d=1$, then from Corollary~\ref{cliniso1}, it follows that $\beta=1-\alpha$ and the relationships $\alpha,\beta\not\in\{0,p-1\}$, $\alpha\neq\beta$ imply
$$
\alpha\not\in\{0,1,2,\frac{p+1}{2},p-1\}.
 $$
 Since for $d=1$ semi-symmetric group isotopes do not exist, then
$$
M_{1}^{as}=\left\{(\alpha,1-\alpha,1)\;\Big|\; \alpha=3,\dots,p-2,\;\alpha\neq\frac{p+1}{2}\right\},
$$
$$
|M_{1}^{as}|=p-5.
$$
The set $M_{0}^{as}$ depends on existence of semi-symmetric isotopes. Nevertheless,
$$
M_{0}^{as}=\left\{(\alpha,\beta,0)\;\big|\; \alpha,\beta=1,2,3,\dots,p-2,\;\alpha\neq\beta\right\}\setminus M^{ss},
$$
$$
|M_{0}^{as}|=\left\{\begin{array}{ll}
  p^2-5p+4,& \text{if $p-3$ is a quadratic residue modulo $p$};\\
  p^2-5p+6,& otherwise.
\end{array}\right.
$$
\end{proof}

Note, that F.~Rad\'{o}~\cite{rado1974} proved that a semi-symmetric group isotope of prime order $p$ exists if and only if $p-3$ is a quadratic residue modulo $p$.

 The general formula of all linear pairwise non-isomorphic group isotopes of the prime order $p$ is found in Corollary~\ref{cliniso2} by F.~Sokhatsky and P.~Syvakivskyj~\cite{sokhasyvak1994} and also by K.~Shchukin~\cite{shchukin2004}.

\begin{corollary}\label{cispk} A number of all linear group isotopes of the prime order $p>3$ up to isomorphism is equal to $p^{2}-p-1$ and it is equal to the sum of the following numbers:
\begin{enumerate}
 \item \label{n1} $p-1$ of strictly commutative quasigroups;
  \item \label{n2} $p-1$ of strictly left symmetric quasigroups;
\item  \label{n3} $p-1$ of strictly right symmetric quasigroups;
\item \label{n4} $1$ of the totally symmetric quasigroup;
 \item \label{n5} $2$ of semi-symmetric quasigroups, if $p-3$ is quadratic residue modulo $p$ and $0$ otherwise;
 \item \label{n6} $(p-2)^{2}-5$ asymmetric quasigroups, if $p-3$ is quadratic residue modulo $p$ and $(p-2)^{2}-3$ otherwise.
\end{enumerate}
\end{corollary}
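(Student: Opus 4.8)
The plan is to read off each of the six cardinalities directly from the partition of $M$ already established in Theorem~\ref{tispm}, and then to confirm that they sum to the independently known total $p^{2}-p-1$ supplied by Corollary~\ref{cliniso2}. Since Theorem~\ref{tispm} exhibits $M$ as a disjoint union of the six explicit subsets $M^{cs}$, $M^{ls}$, $M^{rs}$, $M^{ts}$, $M^{ss}$, $M^{as}$, computing $|M|$ block by block is exactly what produces items~\ref{n1}--\ref{n6}.

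First I would collect the straightforward counts. The set $M^{cs}=M_{0}^{cs}\cup M_{1}^{cs}$ has $(p-2)+1=p-1$ elements, and by the same argument together with its left/right dual the sets $M^{ls}$ and $M^{rs}$ also have $p-1$ elements each, yielding items~\ref{n1}--\ref{n3}. The totally symmetric block $M^{ts}$ reduces to the single isotope $(p-1,p-1,0)$, so item~\ref{n4} gives $1$. For item~\ref{n5}, the solvability of $\alpha^{2}-\alpha+1=0$ in the field $\mathbb{Z}_{p}$, analyzed in the proof of Theorem~\ref{tispm}, shows that $M^{ss}$ contains exactly two isotopes when $p-3$ is a quadratic residue modulo $p$ and none otherwise.

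The only count needing a short computation is the asymmetric one of item~\ref{n6}, which I would obtain by adding the two partial counts already recorded in the proof of Theorem~\ref{tispm}: namely $|M_{1}^{as}|=p-5$ and $|M_{0}^{as}|$ equal to $p^{2}-5p+4$ or $p^{2}-5p+6$ according as $p-3$ is or is not a quadratic residue. Summing gives $|M^{as}|=p^{2}-4p-1=(p-2)^{2}-5$ in the residue case and $|M^{as}|=p^{2}-4p+1=(p-2)^{2}-3$ otherwise, which are the stated formulas.

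Finally I would carry out the consistency check, the only place any bookkeeping is needed. Adding the six blocks in each case gives $3(p-1)+1+2+\bigl((p-2)^{2}-5\bigr)$ when $p-3$ is a quadratic residue and $3(p-1)+1+0+\bigl((p-2)^{2}-3\bigr)$ otherwise, and both expressions simplify to $p^{2}-p-1$, matching Corollary~\ref{cliniso2}. The one point worth remarking is that the two asymmetric counts differ by exactly $2$, which precisely absorbs the appearance or disappearance of the two semi-symmetric isotopes, so the grand total is correctly independent of the quadratic-residue condition. I do not expect any genuine obstacle here, since all the hard classification work was already done in Theorem~\ref{tispm}; the corollary is essentially a tallying exercise whose only subtlety is keeping the two parity cases aligned.
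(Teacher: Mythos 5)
Your proposal is correct and follows the same route as the paper, which simply states that the corollary follows from the proof of Theorem~\ref{tispm}; you merely make explicit the tallying of $|M^{cs}|,\dots,|M^{as}|$ and the check that the six counts sum to $p^{2}-p-1$ in both quadratic-residue cases. The arithmetic ($(p-5)+(p^{2}-5p+4)=(p-2)^{2}-5$ and $(p-5)+(p^{2}-5p+6)=(p-2)^{2}-3$) is right.
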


\begin{proof}  The proof immediately follows from the proof of Theorem~\ref{tispm}.
\end{proof}

{\bf Acknowledgment.} {\it The author is grateful to her scientific supervisor Prof. Fedir Sokhatsky for the design idea and the discussion of this article, to the members of his scientific School for helpful discussions and to the reviewer of English  Vira Obshanska.}

\footnotesize{Department of mathematical analysis and differential equations,  \\
         Faculty of Mathematics and Information Technology \\
         Donetsk National University\\
         Vinnytska oblast, Ukraine 21000\\
e-mail: kraynichuk@ukr.net }

\end{document}